\newcommand{\beq}{\begin{eqnarray*}}
\newcommand{\feq}{\end{eqnarray*}}
\newcommand{\beqn}{\begin{eqnarray}}
\newcommand{\feqn}{\end{eqnarray}}
\newtheorem{theorem}{Theorem}[section]
\newtheorem{lemma}[theorem]{Lemma}
\newtheorem{corollary}[theorem]{Corollary}
\newtheorem{proposition}[theorem]{Proposition}
\theoremstyle{definition}
\theoremstyle{remark}
\newtheorem{remark}[theorem]{Remark}
\numberwithin{equation}{section}
\newtheorem*{theorem*}{Theorem}
\begin{document}
\title[Critical thresholds in Euler-Poisson systems]{Critical thresholds in 
1D pressureless Euler-Poisson systems with varying background}

\author{Manas Bhatnagar and Hailiang Liu}
\address{Department of Mathematics, Iowa State University, Ames, Iowa 50011}
\email{manasb@iastate.edu}
\email{hliu@iastate.edu} 
\keywords{Euler-Poisson system, critical threshold, global regularity, shock formation}
\subjclass{Primary, 35L65; Secondary, 35L67} 

\begin{abstract} 
The Euler Poisson equations describe important physical phenomena in many applications such as semiconductor modeling and plasma physics. This paper is to advance our understanding of critical threshold phenomena in such systems in the presence of different forces.  We identify critical thresholds in 
two damped Euler Poisson systems, with and without alignment, both with attractive potential and spatially varying background state. For both systems, we give respective bounds for subcritical and supercritical regions in the space of initial configuration, thereby proving the existence of a critical threshold for each scenario. Key tools include comparison with auxiliary systems, phase space analysis of the transformed system. 
\end{abstract}

\maketitle

\section{Introduction} \label{prob} 
We are concerned with the critical threshold phenomenon in Euler-Poisson equations subject to 
local and nonlocal forces.
\subsection{Euler Poisson equations}
Euler Poisson equations have been an area of intensive study due to their vast relevance in modeling physical phenomena \cite{BRR94, HJL81, Ja75, Ma86, MaPe90, MRS90}. The general system is composed of three sets of equations: the mass conservation equation, the momentum equations and the Poisson equation. The system 
\begin{align*}
& \rho_t + \nabla\cdot (\rho\mathbf{u}) = 0,\\
& \mathbf{u}_t + \mathbf{u}\nabla\mathbf{u} + \frac{\nabla P(\rho)}{\rho} =-\nu \mathbf{u}   -k\nabla\phi,\\
& -\Delta \phi = \rho - c
\end{align*}
governs the unknown density $\rho=\rho(t, x)$  and velocity $\mathbf{u}=\mathbf{u}(t, x)$ for $x\in \mathbb{R}^N$ (or a bounded domain) and time $t>0$, subject to initial conditions $\rho(0, x)$ and 
$\mathbf{u}(0, x)$. $P$ is the pressure and $c=c(x)$ is the background state which varies with the space variable. The parameter $k$ signifies the property of the underlying force, repulsive $k>0$ or attractive $k<0$, governed by the Poisson equation through the potential $\phi$.

Such systems are widely used in semiconductor modeling where the charge density and current need to be modeled. $\phi$ then represents the electric potential and hence, $-\nabla\phi$ is the electric field. $c$ represents the background charge the semiconductor is doped with. This could be a constant or vary with position within the semiconductor; see \cite{MRS90}.  Another widely known application of this system is modeling plasmas dynamics \cite{Ja75}. Here, the pressure forcing plays an important role, which is usually adiabatic  of form $P(\rho) = A\rho^\gamma,\ \gamma\geq 1$. 

An addition of convolution terms on the right hand side of the momentum equation gives rise to a different class of systems with nonlocal forcing. Such systems have primarily been studied without pressure. It is then called the Euler alignment/Euler Poisson alignment system for $k=0$ and $k\neq 0$ respectively. The momentum equation reads
$$
\mathbf{u}_t + \mathbf{u}\nabla\mathbf{u} = -k\nabla\phi + \psi\ast(\rho\mathbf{u})-\mathbf{u}\psi\ast\rho.
$$
Euler alignment systems arise as macroscopic realization of agent-based dynamics \cite{CS07a,CS07b} which describes the collective motion of finite agents, each of which adjusts its velocity to a weighted average of velocities of its neighbors
\begin{align*}
& \dot x_i =v_i, \\
& \dot v_i= \frac{1}{N} \sum_{j=1}^N \psi(|x_i-x_j|)(v_j-v_i).
\end{align*}
Here $\psi$ is often called influence potential. 
See \cite{HaTa08} for realization of Euler alignment system as a mean field limit of the above type finite agent model as $N\to\infty$. 
It is known that global-in-time strong solutions for the hydrodynamic alignment system will flock.  Global regularity or critical thresholds for such systems 
have been analyzed extensively during the recent years, see \cite{HeTa19, TT14}. Further relevant literature is discussed in Section \ref{pastwork}. 

\subsection{Critical threshold phenomena}
It is well known that the finite-time breakdown of the systems of Euler equations for compressible flows is generic in the sense that finite-time shock formation occurs for all but a “small” set of initial data. Lax \cite{La64} showed that for pairs of conservation laws, $C^1$-smoothness of solutions can be lost unless its two Riemann invariants are non decreasing. With the additional Poisson forcing the system of Euler–Poisson equations admits a “large” set of initial configurations which yield global smooth solutions, see, e.g. 
\cite{ELT01, Lee2, LT02a, LT03, TW08}. 
Indeed, for a class of pressureless Euler--Poisson equations, the question addressed in \cite{ELT01} is whether there is a critical threshold for the initial data such that the persistence of the $C^1$ solution regularity depends only on crossing such a critical threshold. For example, for system of Euler--Poisson equations with only electric force,
\begin{align*}
\begin{aligned}
&\rho_t + (\rho u)_x = 0, \\
&u_t + uu_x =  -k\phi_x , \\
&-\phi_{xx} = \rho -c.
\end{aligned}
\end{align*}
It was shown in \cite[Theorem 3.2]{ELT01} that the system with $k<0$ admits a global solution iff 
$$
u_{0x}(x) \geq \sqrt{-\frac{k}{c}}(\rho_0(x)-c) \; \forall x \in\mathbb{R}, 
$$
and for $k>0$, the critical threshold condition becomes 
$$
|u_{0x}(x)| <\sqrt{k(2\rho_0(x)-c)} \quad c=\text{const}>0. 
$$
It is evident that in the attractive forcing case ($k<0$), the background has a balancing effect and hence, 
it may be possible to observe some comparison principle. In the repulsive forcing case ($k>0$), the solution of the system will be oscillatory, hence more subtle to analyze.  When the above mentioned cases are augmented with damping forces in the momentum equation, an enlarged subcritical region may be observed 
due the further balancing effect from damping. A novel phase plane analysis method was introduced in \cite{BL20} to identify sharp critical thresholds in various scenarios.   


Even amidst the vast study of critical thresholds in Euler-Poisson systems, the variable background case has not been studied much. In fact, to our best knowledge, there is no known critical threshold result for Euler-Poisson equations with a background state that varies in space. This paper is devoted to the study of such scenario.
\subsection{Present investigation} In this work we focus on the pressureless case, in one dimensional periodic setting. Without loss of generality, we can set $\mathbb{T} := [-1/2,1/2]$ to be the domain of the spatial variable. More precisely, we consider the following  damped  Euler--Poisson system with potential induced by a background which is a function of the space variable, 
\begin{subequations}
\begin{align}
\label{EPMain}
\begin{aligned}
&\rho_t + (\rho u)_x = 0, \\
&u_t + uu_x = -\nu u -k\phi_x , \\
&-\phi_{xx} = \rho -c(x),\\
\end{aligned}
\end{align}
on $(0,\infty )\times\mathbb{T}$ subject to periodic initial conditions,
\begin{align}
\label{EPMain2}
\begin{aligned}
& \rho (0,x) = \rho_0 (x) \geq 0,\qquad \rho_0 \in C^1 (\mathbb{T}),\\
& u(0,x) = u_0 (x),\qquad u_0 \in C^1 (\mathbb{T}),\\
\end{aligned}
\end{align}
\end{subequations}
where $c(x)$ is the periodic background term which is Lipschitz continuous and satisfies $0< c_1\leq c\leq c_2$, $\nu \geq 0$ is the damping coefficient,  and parameter $k<0$ signifies attractive forcing. 

Furthermore, we add a nonlocal forcing to the momentum equation. The resulting system is called an Euler-Poisson alignment system,
\begin{subequations}
\begin{align}
\label{backalign}
\begin{aligned}
&\rho_t + (\rho u)_x = 0, \\
&u_t + uu_x = -\nu u + \psi\ast(\rho u)-u\psi\ast\rho -k\phi_x, \\
&-\phi_{xx} = \rho -c(x),\\
\end{aligned}
\end{align}
on $(0,\infty )\times\mathbb{T}$, subject to periodic initial conditions,
\begin{align}
\label{backalign2}
\begin{aligned}
& \rho (0,x) = \rho_0 (x) \geq 0,\qquad \rho_0 \in C^1 (\mathbb{T}),\\
& u(0,x) = u_0 (x),\qquad u_0 \in C^1 (\mathbb{T}),\\
\end{aligned}
\end{align}
\end{subequations}
where $\psi:\mathbb{R}\to [0,\infty )$ is assumed to have the following properties,
\begin{itemize}
\item
$\psi(x) = \psi(-x),\ \forall x>0\ $(Symmetric), 
\item
$\psi(x+1)=\psi(x),\ \forall x\in\mathbb{R}\ $ (1-periodic),
\item
$|\psi(x)-\psi(y)|\leq K|x-y|,\ x,y\in\mathbb{R}$ and some $K>0\ $ (Lipschitz continuous). 
\end{itemize}
Let $\min \psi = \psi_m$ and $\max \psi = \psi_M$.
Also, without loss of generality, 
$$
 \int_\mathbb{T}\rho_0(x)\, dx= \int_\mathbb{T}\rho(t,x)\, dx = 1 \; \text{and}\;  \int_\mathbb{T} c(x)dx = 1.
 $$ 
In this paper, we obtain bounds for supercritical as well as subcritical regions in the configuration of initial data for the aforementioned cases, thereby proving the existence of a critical threshold for each system.

\subsection{Related work}\label{pastwork}
There is a considerable amount of literature available on the solution behavior of Euler--Poisson equations. 
\cite{En96, WC98} gives results for nonexistence and singularity formation;   \cite{CW96, WW06} for global existence of weak solutions with geometrical symmetry; \cite{MN95} for isentropic case, and \cite{PRV95} for isothermal case. For 3-D irrotational flow consult \cite{Gu98, GMP13, GP11}. Smooth irrotational solutions for the two dimensional Euler--Poisson system are constructed independently in \cite{IP13, LW14}. See also \cite{Ja12, JLZ14} for related results on two dimensional case. In the one-dimensional Euler--Poisson system with both adiabatic pressure and a nonzero background, the authors in \cite{GHZ17} showed the persistence of global solutions for initial data which is a small perturbation about the equilibrium. 
Yet the existence of a critical threshold for such setting is still open. 

For results on critical thresholds in restricted Euler-Poisson systems, we refer to \cite{LT03} for sharp conditions on global regularity vs finite time breakdown for the 2-D restricted Euler--Poisson system, and \cite{LT02a} for sufficient conditions on finite time breakdown for the general n-dimensional restricted Euler--Poisson systems. A relative complete analysis of critical thresholds in multi-dimensional restricted Euler--Poisson systems is given in \cite{Lee2} for both attractive and repulsive forcing. For multidimensional Euler-Poisson with spherically symmetric solutions, see \cite{ELT01,WTB12}. 

During recent years, Euler alignment systems have been studied by several researchers,
 see \cite{CCTT16, TT14} for alignment forces dictated by bounded kernels, \cite{DKRT18, TK18} by singular kernels.  The authors in \cite{TT14} give bounds on subcritical and supercritical regions for the Euler alignment system, i.e., $k=0, \nu=0$ in (\ref{backalign}) with bounded kernel in one and two dimensions. The critical threshold condition for one dimensional Euler alignment system was further made precise in \cite{CCTT16}. The authors also studied undamped Euler Poisson alignment system, i.e. $k\neq 0$, $\nu = 0$, $c=0$ in \eqref{backalign} where they showed that for such a system with $k<0$, there is unconditional breakdown. Our result stated in Theorem \ref{nonlocvarcgs} shows the existence of a subcritical region in case of a positive background. Therefore, once again we see that presence of background with attractive forcing has a balancing effect.  However, our comparison tools do not seem to be applicable to the situation with repulsive forcing ($k>0$). 
 
\subsection{Plan of the paper}
Our work analyses two classes of Euler Poisson systems: 
\begin{enumerate}
	\item Pressureless Euler Poisson with background, and 
	\item Pressureless Euler Poisson alignment with background.
\end{enumerate}
As a result all the further components of this paper are divided into two parts, each pertaining to a system. Section \ref{mainresults} contains the main results along with some necessary preliminary analysis. It has three subsections. The first one is devoted to the preliminary calculations. The other two contain the main results for each of the aforementioned systems. Section \ref{epwoalign} contains the analysis/tools/proof to the theorems pertaining to the first system and Section \ref{varbackalign} contains the same for Euler Poisson alignment system.

\section{Main results}
\label{mainresults}
\subsection{Preliminaries}
\label{prelim1} 
The critical threshold analysis to be carried out is the a priori estimate on smooth solutions as long as they exist.  For the one-dimensional
Euler-Poisson problem, local existence of smooth solutions was long known, it can be justified by using the characteristic method in the pressureless case.

\begin{theorem}
\label{local}
$($\textbf{Local existence}$)$ If $\rho_0 \in C^1$ and  $u_0 \in C^1$ , then there exists $T>0$, depending on the initial data, such that the initial value problem (\ref{EPMain}),  (\ref{EPMain2}) admits a unique solution $(\rho, u) \in C^{1} ([0,T)\times \mathbb{T}).$  Moreover,  if the maximum life span $T^* < \infty$, then
$$
\lim_{t \uparrow T^*} \partial_x u(t, x^*) =-\infty
$$
for some $x^* \in \mathbb{T}$.
\end{theorem}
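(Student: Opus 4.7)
The plan is to exploit the hyperbolic structure of \eqref{EPMain} by reformulating it as an ODE system along characteristics, then apply a standard Picard iteration for local existence/uniqueness and a continuation argument for the blow-up criterion. First I would introduce the flow map $X(t,\alpha)$ defined by $\partial_t X = u(t,X)$, $X(0,\alpha)=\alpha$, and the Lagrangian quantities $U(t,\alpha) = u(t,X(t,\alpha))$, $R=\rho\circ X$, $D = u_x\circ X$. Differentiating the momentum equation once in $x$ and writing the system along characteristics yields the closed ODE system
\begin{align*}
U' &= -\nu U - k\phi_x(t,X),\\
R' &= -R\, D,\\
D' &= -D^2 - \nu D + k R - k\, c(X),
\end{align*}
where $\phi_x$ is recovered from $\rho$ on the torus via the zero-mean periodic Green's function $G$, namely $\phi_x(t,x) = \int_{\mathbb{T}} G_x(x-y)\bigl(\rho(t,y)-c(y)\bigr)\,dy$. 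Since $\int_\mathbb{T}\rho = \int_\mathbb{T} c = 1$ and $G_x$ is bounded, one has $\|\phi_x(t,\cdot)\|_\infty \leq C\bigl(\|\rho(t,\cdot)\|_{L^1}+\|c\|_{L^1}\bigr)$, depending only on the conserved mass and the data.

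For local existence I would run a contraction-mapping argument: given a candidate velocity $u$ in a ball of $C([0,T];C^1(\mathbb{T}))$, solve the characteristic ODE, integrate the $R$- and $D$-equations along it, recover $\phi_x$ by convolution, integrate the $U$-equation, and compare the output to the input. The Lipschitz bound on $G_x$ combined with standard Gr\"onwall estimates on the characteristic ODEs shows that the map is a contraction on a sufficiently short $[0,T]$, and the fixed point is pushed back to a $C^1$ Eulerian solution of \eqref{EPMain}-\eqref{EPMain2} via the inverse flow. Uniqueness on $[0,T]$ follows from a Gr\"onwall estimate on the difference of two solutions written in Lagrangian form.

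For the blow-up criterion, suppose by contradiction that $T^*<\infty$ yet $\inf_{[0,T^*)\times\mathbb{T}} u_x > -\infty$. With $D$ bounded below, integrating $R' = -RD$ keeps $R$ uniformly bounded on $[0,T^*)$; mass conservation $\|\rho(t,\cdot)\|_{L^1}\equiv 1$ keeps $\phi_x$ uniformly bounded, and Gr\"onwall applied to $U' = -\nu U - k\phi_x(t,X)$ bounds $U$. The quadratic $-D^2$ term in the $D$-equation prevents escape to $+\infty$, so $D$ is uniformly bounded on $[0,T^*)\times\mathbb{T}$. Continuity in $\alpha$ of every quantity is propagated by the continuous right-hand sides, so $(\rho,u)(t,\cdot)$ stays in a bounded subset of $C^1(\mathbb{T})$ as $t\uparrow T^*$; restarting local existence at a time close to $T^*$ extends the solution past $T^*$, contradicting maximality. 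I expect the main technical obstacle to be the Picard step: one must simultaneously track the Jacobian $\partial_\alpha X$ (keeping it bounded away from zero so the flow can be inverted) and the nonlocal coupling through $\phi_x$, choosing the function-space norm so that both pieces contract on the same time interval.
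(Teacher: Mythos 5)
Your proposal is correct in outline but takes a genuinely different route from the paper, and it is also doing considerably more work than the paper does. The authors do not actually give a Picard-style proof of Theorem~\ref{local}: they cite the result as known for constant background and then only ``formally justify'' that variable $c$ changes nothing, their one concrete move being to set $E := -\phi_x$ and observe that along a characteristic $E' = -c(x(t))\,u$, so that $(x,u,E)$ solves the \emph{closed, finite-dimensional, purely local} ODE system \eqref{irrsys}. This removes the nonlocal Poisson coupling from the characteristic equations entirely, after which boundedness of $(x,u,E)$ follows from a one-line Gr\"onwall estimate and the blow-up question reduces to the $(\rho,d)$ Riccati pair \eqref{nonlin}. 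You instead keep the nonlocality explicit through the periodic Green's function representation of $\phi_x$ and run a contraction mapping in $C([0,T];C^1(\mathbb{T}))$. Both are legitimate, but the paper's $E$-trick buys you a self-contained ODE per characteristic (no fixed point in a function space needed to close $\phi_x$), which is why the authors can treat the local theory as essentially a citation.

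Two points in your continuation argument deserve more care. First, you conclude that $(\rho,u)(t,\cdot)$ stays bounded in $C^1(\mathbb{T})$, but you have only controlled $\rho$, $u$, and $u_x$; the $C^1$ norm also requires $\rho_x$. That quantity satisfies, along characteristics, a linear ODE coupled with $\partial_\alpha D$ (equivalently with $u_{xx}$), driven by $c'(X(t,\alpha))$, so one must add a Gr\"onwall step for the pair $(\partial_\alpha R,\partial_\alpha D)$; since $c$ is only Lipschitz this step needs a word about why $c'\circ X$ is still tractable. Second, your contradiction only shows $\inf_{[0,T^*)\times\mathbb{T}} u_x = -\infty$, whereas the theorem asserts convergence to $-\infty$ at a \emph{single} $x^*$. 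Closing that gap is standard once one uses the Riccati structure of $D$ along a fixed characteristic (a single $\alpha_0$ must carry $D(t,\alpha_0)\to-\infty$, and $X(t,\alpha_0)$ converges since $u$ is bounded), but it is not automatic from the sup bound and should be stated.
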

To our knowledge, such local existence theorem has been known for a constant background case ($c=$const  in \eqref{EPMain}). However, we will formally justify that the dependence of $c$ on the space variable does not change the result of the theorem for \eqref{EPMain} as well as \eqref{backalign} as long as $c(x(t))$ is well-defined and bounded.  We will show this by analyzing a set of equations obtained along the characteristic curve. 

We proceed to derive the characteristic system which is essential to our critical threshold analysis. Differentiate the second equation in (\ref{EPMain}) with respect to $x$,  and set $d:=u_x$ to obtain:
	\begin{subequations}\label{nonlin}
	\begin{align}
	& \rho' + \rho d = 0, \label{nonlin11} \\
	& d' + d^2 + \nu d = k(\rho -c(x(t)) ), \label{nonlin12}
	\end{align}
	\end{subequations}
	where we have used the Poisson equation in (\ref{EPMain})  for $\phi$, and 
	\[
	\{\}' = \frac{\partial}{\partial t} + u\frac{\partial}{\partial x}
	\]
	 denotes the differentiation along the particle path,
	$$
	\Gamma=\{(t,x)|\;  x'(t)=u(t,x(t)), x(0)=\alpha \in \mathbb{T}\}.
	$$
Here, we employ the method of characteristics to convert the PDE system (\ref{EPMain}) to ODE system (\ref{nonlin}) along the particle path which is fixed for a fixed value of the parameter $\alpha$. Consequently, the initial conditions to the above equations are $\rho (0)=\rho_0 (\alpha)$ and $d(0) = d_0 (\alpha) = u_{0x}(\alpha )$ for each $\alpha\in\mathbb{T}$. Note that this in itself is not a closed system.  However, we can obtain a complete system with additional ODEs.  Setting 
$$
E:=-\phi_x = \int_{-1/2}^{x}\!\!\!\rho(t,y) - c(y)dy-\int_0^t (\rho u)(s, -1/2)ds, 
$$ 
we obtain,
\begin{align}
\label{irrsys}
\begin{aligned}
& x' = u,\\
& u' + \nu u = kE,\\
& E' = -cu. 
\end{aligned}
\end{align}
For $E$ to be periodic, necessarily 
$$
\int_\mathbb{T}\rho(t, y) - c(y)dy =0 
$$
for any $t>0$. This combined with the conservation of mass requires 
 $\int_\mathbb{T}\rho_0(y) - c(y)dy = 0$. Note that this subsystem is a closed system, which allows us to independently analyze \eqref{nonlin} with $c$ obtained from system (\ref{irrsys}).  Since $c(x)$ is Lipschitz continuous, the system for $(x, u, E)$ admits a unique solution for each given initial data.   Moreover,
\begin{align*}
\frac{1}{2}(x^2+u^2+E^2)' & = x u+kuE-\nu u^2- cuE\\
& \leq (1+|k|+2\nu+\max c)(x^2+u^2+E^2).
\end{align*}
On integrating we get 
$$
x^2+u^2+E^2 \leq (\alpha^2+u_0^2+E^2(0, \alpha))e^{2(1+|k|+2\nu+\max c)t} \quad \forall t>0,
$$
which says that $x,u,E$ remain bounded for all time. Hence, we can solely analyze \eqref{nonlin} to conclude the long time behavior of the solution. The system with alignment \eqref{backalign} is no different. Indeed, $u$ remains bounded because at any time, the alignment force is a mere weighted average of the relative speed. 

The above discussion shows that we still lie in the purview of Theorem \ref{local}. That is, if $u_x$ remains bounded for all the characteristics then we ensure global-in-time solution from Theorem \ref{local}. Likewise, if for any characteristic, $|u_x|\to\infty$ in finite time, there is finite time breakdown. This allows us to analyze \eqref{nonlin} as a system for our purpose using the bounds of $c(x)$.

 
We are now in a position to establish our critical threshold theory which includes results on both the global-in-time solution and finite time breakdown for \eqref{EPMain} and \eqref{backalign}. 

In order to conveniently present our main results and their proofs, we here introduce two functions 
from $\mathbb{R}^+\times \mathbb{R}^+ \to \mathbb{R}$,
\begin{subequations} \label{roots}
\begin{align}
& \Omega(\gamma, \beta):= \frac{\beta+\sqrt{\beta^2-4k\gamma}}{2}, \\ 
& \Theta(\gamma, \beta):= \frac{-\beta+\sqrt{\beta^2-4k\gamma}}{2}, 
\end{align}  
\end{subequations} 
where these two functions are both non-negative, and  
$$
-\Omega(\gamma, \beta) \leq 0 \leq \Theta(\gamma, \beta).
$$
 

\subsection{Euler-Poisson with variable background}
\label{secdampbacknoalign}
We first state our main results for \eqref{EPMain}. 
\begin{theorem}[Global Solution]
\label{GS}
Consider the system \eqref{EPMain} with initial conditions \eqref{EPMain2}. Let $k<0$, $\lambda_1 = \Omega(c_1, \nu)$, $0<c_1=\min_{x\in \mathbb{T}} c(x)$ and $c_2=\max_{x\in \mathbb{T}} c(x)$.  If  
$$
(\rho_0(x),u_{0x}(x))\in\left\{ (\rho ,d):  d> \frac{\lambda_1}{c_1}(\rho -c_1) \right\} \quad \forall x, 
$$ 
then there exists a unique global-in-time solution, $\rho,u\in C^1((0,\infty)\times\mathbb{T})$. In particular, 
\begin{align*}
& ||\rho(t,\cdot)||_\infty\leq ||\rho_0||_\infty e^{\lambda_1 t},\ and\\ 
& -\lambda_1\leq u_x(t,\cdot)\leq\max\left\{ \max u_{0x}, \Theta(c_2, \nu)\right\}. 
\end{align*}
\end{theorem}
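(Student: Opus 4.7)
The plan is to work along characteristics via the ODE system \eqref{nonlin} and to exhibit a forward-invariant region in the $(\rho,d)$-plane tailored to the threshold line in the hypothesis. The preliminary discussion already reduces global regularity for \eqref{EPMain} to a uniform pointwise bound on $d=u_x$, so it suffices to propagate the initial condition $d_0(\alpha)>\tfrac{\lambda_1}{c_1}(\rho_0(\alpha)-c_1)$ into a two-sided bound on $d(t)$ along every characteristic.

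First I would define $F(\rho,d):=d-\tfrac{\lambda_1}{c_1}(\rho-c_1)$ and differentiate it along a characteristic. Substituting $d=\tfrac{\lambda_1}{c_1}\rho-\lambda_1$ (the equation of $\{F=0\}$) into \eqref{nonlin11}--\eqref{nonlin12} causes the $\rho^2$ and $\rho$-linear terms to cancel after invoking the defining identity $\lambda_1^2-\nu\lambda_1+kc_1=0$ for $\lambda_1=\Omega(c_1,\nu)$, leaving
\[
F'\big|_{F=0}=k\bigl(c_1-c(x(t))\bigr)\geq 0,
\]
since $k<0$ and $c(x(t))\geq c_1$. The characteristic vector field therefore points into $\{F\geq 0\}$ along its boundary, and a standard minimum-time argument then shows that $F(0)>0$ forces $F(t)\geq 0$ on the whole existence interval.

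With $F(t)\geq 0$ and $\rho(t)\geq 0$ in hand (the latter preserved by $\rho'=-\rho d$, which integrates to $\rho=\rho_0\exp(-\!\int_0^t d\,ds)$), one obtains $d(t)\geq\tfrac{\lambda_1}{c_1}(\rho(t)-c_1)\geq -\lambda_1$, and feeding this back into $\rho'=-\rho d\leq \lambda_1\rho$ gives $\|\rho(t,\cdot)\|_\infty\leq \|\rho_0\|_\infty e^{\lambda_1 t}$. For the upper bound on $d$, I would use $k\rho\leq 0$ and $-kc(x(t))\leq -kc_2$ in \eqref{nonlin12} to obtain
\[
d'\leq -d^2-\nu d-kc_2=-\bigl(d-\Theta(c_2,\nu)\bigr)\bigl(d+\Omega(c_2,\nu)\bigr),
\]
where the factorisation uses that $\Theta(c_2,\nu)$ and $-\Omega(c_2,\nu)$ are the two roots of $d^2+\nu d+kc_2$. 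Whenever $d>\Theta(c_2,\nu)$ both factors are positive and $d'<0$, so $d$ cannot cross $\Theta(c_2,\nu)$ from below and $d(t)\leq\max\{d_0(\alpha),\Theta(c_2,\nu)\}$; taking the supremum over $\alpha\in\mathbb{T}$ yields the stated upper bound.

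The resulting two-sided control on $d$ precludes the breakdown scenario of Theorem \ref{local}, giving the unique $C^1$ solution on $[0,\infty)\times\mathbb{T}$. I expect the only real obstacle to be the invariant-region step: because $c(x(t))$ is uncontrolled apart from the pointwise bounds $c_1\leq c\leq c_2$, the slope $\lambda_1/c_1$ and constant $\lambda_1$ in the threshold line must be matched exactly so that every $\rho$-dependent term in $F'$ cancels on $\{F=0\}$, leaving only the residual $k(c_1-c(x(t)))$. The requirement that this residue has a favorable sign is precisely why the line is tied to the \emph{lower} background $c_1$ and why the argument needs $k<0$.
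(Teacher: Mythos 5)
Your argument is correct, and it takes a genuinely different route from the paper's. You prove forward invariance of the half-plane $\{F\ge 0\}$, with $F:=d-\tfrac{\lambda_1}{c_1}(\rho-c_1)$, by a direct computation in the original $(\rho,d)$ coordinates: the identity $\lambda_1^2-\nu\lambda_1+kc_1=0$ (equivalently, $\lambda_1=\Omega(c_1,\nu)$ is a root of $\lambda^2-\nu\lambda+kc_1$) kills every $\rho$-dependent term on $\{F=0\}$ and leaves $F'\big|_{F=0}=k(c_1-c(x(t)))\ge 0$. The paper instead sets up the auxiliary constant-background system \eqref{auxnonlin} with $\gamma=c_1$, linearizes it by the substitution $(r,s)=(\xi/\eta,1/\eta)$ in Proposition \ref{phpl}, reads the threshold off the stable manifold of the resulting linear saddle, and transfers the conclusion to the varying-background system through the Comparison Lemma \ref{comparison}. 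Your approach is shorter and self-contained; the paper's heavier machinery buys a sharp iff statement for constant background (Theorem \ref{cconst}) essentially for free and is reused verbatim for the breakdown Theorem \ref{FTB}. One small point to tighten in your write-up: on the boundary the inward derivative is only $\ge 0$, not $>0$ (it vanishes whenever $c(x(t))=c_1$), so a naive first-crossing argument does not close the invariance step by itself. The clean repair is to observe that your computation in fact yields
\[
F'=k\bigl(c_1-c(x(t))\bigr)-F\Bigl(F+\tfrac{\lambda_1}{c_1}\rho+\nu-2\lambda_1\Bigr),
\]
which is linear in $F$ with non-negative forcing; Duhamel's formula then gives $F(t)\ge F(0)\,e^{-\int_0^t h\,ds}>0$ with $h:=F+\tfrac{\lambda_1}{c_1}\rho+\nu-2\lambda_1$, on the whole interval of existence. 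The rest of your argument --- $\rho\ge 0$ and $F\ge 0$ forcing $d\ge -\lambda_1$, the exponential bound on $\rho$, the upper bound from $d'\le -(d+\Omega(c_2,\nu))(d-\Theta(c_2,\nu))$, and invoking Theorem \ref{local} --- matches the paper and is correct.
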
 

\begin{theorem}[Finite Time Breakdown]
\label{FTB}
Consider the system \eqref{EPMain} with initial conditions \eqref{EPMain2}. Let $k<0$ and $c_2=\max_{x\in \mathbb{T}} c(x)$. If $\exists x_0\in\mathbb{T}$ such that
$$
u_{0x}(x_0) < \frac{\Omega(c_2, \nu)}{c_2}(\rho_0(x_0)-c_2), 
$$
then $\exists (t^*,x^*)$ such that $\lim_{t\uparrow t^{*}} u_x(t,x^*)=-\infty$.
\end{theorem}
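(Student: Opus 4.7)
The plan is to follow the characteristic through $x_0$ and show that along it $d=u_x\to-\infty$ in finite time, working with the ODE system \eqref{nonlin}. I will split into two cases. If $\rho_0(x_0)=0$, the density remains zero along the characteristic and \eqref{nonlin12} reduces to $d'+d^2+\nu d=-kc(x(t))$, while the hypothesis becomes $d_0<-\Omega(c_2,\nu)$. Using $c(x(t))\le c_2$, $k<0$, and the factorization $d^2+\nu d+kc_2=(d+\Omega)(d-\Theta)$, I will obtain
\[
d'\le -(d+\Omega)(d-\Theta),
\]
whose right-hand side is strictly negative for $d<-\Omega$. A scalar Riccati comparison then forces $d\to-\infty$ in finite time.

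For $\rho_0(x_0)>0$ I plan to linearize the characteristic system via $v:=1/\rho$, $w:=d/\rho$. A direct computation gives $v'=w$ and $w'+\nu w=k-kc(x(t))\,v$. Shifting by $V:=v-1/c_2$ moves the equilibrium of the $c\equiv c_2$ system to the origin and yields the non-autonomous linear system
\[
V'=w,\qquad w'=-\nu w-kc(x(t))\,V+\tfrac{k}{c_2}\bigl(c_2-c(x(t))\bigr).
\]
I will introduce the Lyapunov-type quantity $P:=\Omega V+w$ with $\Omega=\Omega(c_2,\nu)$. Using the identities $\Omega-\nu=\Theta$ and $\Omega\Theta=-kc_2$, a straightforward calculation gives
\[
P'=\Theta P+k\bigl(c_2-c(x(t))\bigr)\,v.
\]
Since $k<0$, $c_2-c(x(t))\ge 0$, and $v>0$, the inhomogeneity is non-positive, so $P'\le \Theta P$ and $P(t)\le P(0)e^{\Theta t}$. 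An algebraic check shows that the hypothesis $d_0<\frac{\Omega}{c_2}(\rho_0-c_2)$ is exactly the condition $P(0)<0$, so $P(t)\to -\infty$ as $t\to\infty$.

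The remaining step is to upgrade this asymptotic decay to finite-time breakdown. I will argue by contradiction: if the characteristic solution existed globally, then $v(t)>0$, equivalently $V(t)>-1/c_2$, for all $t\ge 0$. The lower bound $-\Omega V<\Omega/c_2$ combined with $w=P-\Omega V$ gives $w(t)<P(t)+\Omega/c_2\to -\infty$; integrating $V'=w$ then pushes $V\to -\infty$, contradicting $V>-1/c_2$. Hence there is a first $t^*<\infty$ at which $V(t^*)=-1/c_2$. Since $V$ approaches this value from above, $w(t^*-)<0$, and $d=w/v\to -\infty$ as $t\uparrow t^*$, which is the breakdown of Theorem~\ref{local}. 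The main obstacle is precisely this last conversion: the Gr\"onwall-type inequality only delivers exponential decay of $P$ at $t=\infty$, and the finite-time blow-up of $d$ must be extracted indirectly from the physical constraint $v>0$ together with the global well-posedness of the linear ODE for $(V,w)$.
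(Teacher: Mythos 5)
Your argument is correct, and it is a genuinely different route from the one in the paper. The paper first proves a comparison lemma (Lemma~\ref{comparison}) at the level of the characteristic variables $(\rho,d)$, pitting \eqref{nonlin} against the constant-background auxiliary system \eqref{auxnonlin} with $\gamma=c_2$, and then runs a phase--plane argument (Proposition~\ref{phpl}) on the \emph{auxiliary} system to get finite-time blow-up there; the comparison then transfers it back. You instead linearize the original non-autonomous characteristic system directly via $v=1/\rho$, $w=d/\rho$, and build the comparison with the constant-$c_2$ problem into the single scalar quantity $P=\Omega V+w$, where $P=0$ is exactly the stable separatrix of the frozen $c\equiv c_2$ linear system. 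The identity $P'=\Theta P+k(c_2-c(x(t)))\,v$ is correct (it uses $\Omega-\nu=\Theta$ and $\Omega\Theta=-kc_2$), the inhomogeneity has the good sign since $k<0$, $c\le c_2$, $v>0$, and the algebra confirming $P(0)<0\iff d_0<\frac{\Omega}{c_2}(\rho_0-c_2)$ is right. This replaces the paper's two-step scheme (separate comparison lemma plus phase-plane analysis of the auxiliary system) with one monotone Lyapunov quantity. The price is that you obtain only $P(t)\le P(0)e^{\Theta t}\to-\infty$ rather than a direct finite-time singularity, and you must then convert asymptotic decay into finite-time breakdown by the contradiction argument through $v>0$; the paper's route delivers a blow-up time for the auxiliary system explicitly. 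You correctly flag this conversion as the delicate step, and your handling of it is sound: the linear $(V,w)$ system has a global solution since $c(x(t))$ is bounded (this is established in the paper's preliminaries via the bounded $(x,u,E)$ system), $V\to-\infty$ forces a first crossing $V(t^*)=-1/c_2$, and at that crossing one checks $w(t^*)<0$ (if $w(t^*)=0$ then $w'(t^*)=k<0$, making $t^*$ a strict local max of $V$, which contradicts $V$ approaching $-1/c_2$ from above), so $d=w/v\to-\infty$ as $t\uparrow t^*$. The $\rho_0(x_0)=0$ branch via the Riccati comparison $d'\le-(d+\Omega)(d-\Theta)$ with $d_0<-\Omega$ is also fine. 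A small expository point: you should close by citing Theorem~\ref{local} to identify the blow-up of $d$ along the characteristic with the loss of $C^1$ regularity of $(\rho,u)$, exactly as the paper does.
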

\begin{remark}
We would like to point out that essentially the same threshold results hold for the case when the domain is all of $\mathbb{R}$. We work on the periodic case to avoid a technical discussion at far fields ($x\to\pm\infty$), 
especially in the alignment with background case (Section \ref{varbackalign}), where talking about far fields is physically less meaningful as the total mass is infinite because of the following imperative neutrality condition
\[
\int_{-\infty}^\infty (\rho_0(y)-c(y))\, dy = 0.
\]
\end{remark}
\begin{theorem}
\label{cconst}
Consider the system \eqref{EPMain} with initial conditions \eqref{EPMain2} and assume $c(x)\equiv c$. Then there exists unique global solution $\rho,u\in C^1((0,\infty)\times\mathbb{T})$ iff 
$$
(\rho_0(x),u_{0x}(x))\in\left\{ (\rho ,d): d\geq \frac{\Omega(c,\nu)}{c}(\rho -c) \right\} \quad \forall x \in \mathbb{T}.
$$
\end{theorem}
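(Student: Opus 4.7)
The plan is to combine the two earlier threshold theorems with a short phase-plane identity that closes the equality gap between them. Since $c(x)\equiv c$, the characteristic system (\ref{nonlin}) reduces to the autonomous planar ODE
\[
\rho' = -\rho d, \qquad d' = -d^{2} - \nu d + k(\rho - c),
\]
with initial data $(\rho_0(\alpha), u_{0x}(\alpha))$ indexed by $\alpha\in\mathbb{T}$. By the local existence result (Theorem \ref{local}), global $C^{1}$ regularity is equivalent to $d(t;\alpha)$ remaining bounded from below along every characteristic, so the task reduces to an ODE invariance argument in the $(\rho,d)$-plane.

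For the sufficiency direction, suppose $d_{0}(\alpha)\geq \frac{\Omega(c,\nu)}{c}(\rho_{0}(\alpha)-c)$ for every $\alpha$. I would introduce the ``gap'' function
\[
G := d - \frac{\Omega(c,\nu)}{c}(\rho-c),
\]
and compute $G'$ along characteristics. Writing $\Omega = \Omega(c,\nu)$ and using the defining quadratic $\Omega^{2}-\nu\Omega + kc = 0$, equivalently $kc/\Omega = \nu - \Omega$, a direct substitution of $\rho - c = \frac{c}{\Omega}(d-G)$ into $d'$ collapses the computation to the linear homogeneous ODE
\[
G' = (\Omega-\nu-d)\, G.
\]
Since this is linear in $G$, the sign of $G$ is preserved along each particle path; in particular $G(0)\geq 0$ forces $G(t)\geq 0$ for every $t$ in the maximal existence interval. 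Combined with $\rho(t;\alpha) = \rho_{0}(\alpha)\exp\!\left(-\int_{0}^{t}d\,ds\right) \geq 0$, this yields the uniform lower bound
\[
d(t;\alpha) \;\geq\; \frac{\Omega(c,\nu)}{c}\bigl(\rho(t;\alpha)-c\bigr) \;\geq\; -\Omega(c,\nu),
\]
so $u_{x}$ cannot break down to $-\infty$, and Theorem \ref{local} delivers a global $C^{1}$ solution.

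For the necessity direction, suppose the threshold condition fails, i.e.\ there exists $x_{0}\in\mathbb{T}$ with $u_{0x}(x_{0}) < \frac{\Omega(c,\nu)}{c}(\rho_{0}(x_{0})-c)$. Because $c_{1}=c_{2}=c$ in this setting, this is exactly the hypothesis of Theorem \ref{FTB}, which supplies a characteristic along which $u_{x}\to -\infty$ in finite time, ruling out a global smooth solution. The only real obstacle in the proof is deriving the phase-plane identity $G' = (\Omega-\nu-d)\,G$ cleanly; geometrically this identity is the statement that the line $d = \frac{\Omega(c,\nu)}{c}(\rho-c)$ is the stable manifold of the saddle point $(c,0)$ of the planar system, so the equality case of the threshold corresponds precisely to data sitting on this stable manifold and therefore flowing into the saddle as $t\to\infty$ rather than blowing up. Once this identity is established the remainder of the argument is routine bookkeeping, and no further input beyond Theorems \ref{local}, \ref{GS}, and \ref{FTB} is needed.
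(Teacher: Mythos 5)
Your argument is correct, and the key identity checks out: writing $\Omega=\Omega(c,\nu)$, substituting $\rho-c=\tfrac{c}{\Omega}(d-G)$ into $d'=-d^2-\nu d+k(\rho-c)$ and using $kc/\Omega=\nu-\Omega$ (from $\Omega^2-\nu\Omega+kc=0$) indeed collapses $G'=d'-\tfrac{\Omega}{c}\rho'$ to the linear equation $G'=(\Omega-\nu-d)G$, so the sign of $G$ is conserved and $d\geq\tfrac{\Omega}{c}(\rho-c)\geq-\Omega$ along every characteristic with $\rho\geq 0$. However, the route is genuinely different from the paper's, which notes that for constant $c$ the characteristic system \eqref{nonlin} is \emph{identical} to the auxiliary system \eqref{auxnonlin} with $\gamma=c$, so both directions of the iff follow from a single application of Proposition \ref{phpl} — there is no need to invoke the separate Theorems \ref{GS}/\ref{FTB}. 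Your gap-function linearization gives a self-contained, two-line proof of sufficiency that avoids the $(r,s)=(\xi/\eta,1/\eta)$ transformation and phase-plane analysis used in Proposition \ref{phpl}, and it makes the geometric content explicit: $\{G=0\}$ is the stable manifold of the saddle $(c,0)$ of the planar $(\rho,d)$ system (eigenvalues $-\Omega,\Theta$), which is exactly what Proposition \ref{phpl} encodes in the transformed $(r,s)$ coordinates. For necessity you rely on Theorem \ref{FTB} (and so, indirectly, on Proposition \ref{phpl} and the comparison lemma), which is fine but less economical than the paper's direct appeal to the proposition; you could equally well apply Proposition \ref{phpl} directly, or observe that $G(0)<0$ forces $G(t)<0$ for all $t$ and then argue blowup on the strict subgraph, but citing \ref{FTB} is certainly valid. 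One cosmetic remark: your $G$ collides notationally with the quantity $G=u_x+\nu+\psi\ast\rho$ used in Section \ref{varbackalign}; a different letter would avoid confusion.
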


\subsection{Euler-Poisson alignment with variable background}
We now state the main results for \eqref{backalign}. 
\begin{theorem}
\label{nonlocvarcgs}
Consider the system \eqref{backalign} with initial conditions \eqref{backalign2}. Let $k<0$, $0<c_1 = \min_{x\in\mathbb{T}} c(x)$, $c_2 = \max _{x\in\mathbb{T}} c(x)$, $\psi_M = \max_{x\in\mathbb{R}}\psi(x)$, $\psi_m = \min_{x\in\mathbb{R}}\psi(x)$ and \\
$\lambda_M = \Omega(c_1,\nu+\psi_M)$.
If 
$$
u_{0x}(x)> \frac{\lambda_M\rho_0(x)}{c_1} - \Theta(c_1,\nu+\psi_M) -\nu- \psi\ast\rho_0(x) \quad 
\forall x\in\mathbb{T},
$$	
then there exists a unique global solution $\rho,u\in C^1((0,\infty)\times\mathbb{T})$.
Furthermore, we have the following bounds,
\begin{align*}
& ||\rho(t,\cdot)||_\infty\leq ||\rho_0||_\infty e^{\lambda_M t},\\
& -\lambda_M\leq u_x(t,\cdot )\leq \max\{\max u_{0x}, \Theta(c_2,\nu+\psi_M) \}+\psi_M-\psi_m.
\end{align*}
\end{theorem}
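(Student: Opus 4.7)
My plan is to extend the phase-plane method used in Theorem~\ref{GS} to the nonlocal setting by introducing the Carrillo--Choi--Tadmor--Tan variable $e := u_x + \psi\ast\rho$ and its translate $q := e + \nu$. First I would differentiate the momentum equation in \eqref{backalign} with respect to $x$ and combine with $(\psi\ast\rho)_t = -\psi_x\ast(\rho u)$ (a consequence of mass conservation). The two nonlocal contributions $(\psi\ast(\rho u))_x$ and $-u(\psi\ast\rho)_x$ coming out of the differentiated momentum equation cancel exactly against $-D(\psi\ast\rho)/Dt$, and the result along the characteristic curve $x'(t)=u(t,x(t))$ is the clean system
\begin{align*}
& \frac{D\rho}{Dt} = -\rho\, u_x, \\
& \frac{Dq}{Dt} = -q^{2} + (\nu+\eta)\, q + k\bigl(\rho - c(x(t))\bigr),
\end{align*}
with $\eta := \psi\ast\rho \in [\psi_m,\psi_M]$ and $u_x = q - \nu - \eta$. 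In this formulation the alignment force is absorbed into an effective damping $\nu+\eta$, reducing the analysis to a Riccati-type equation for $q$ coupled to the density equation through coefficients varying in an a priori controlled range.

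The heart of the proof is the invariance under the characteristic flow of the region
\[
\Sigma := \Bigl\{(q,\rho) : \rho \geq 0,\ \Phi := q - \tfrac{\lambda_M}{c_1}\rho + \Theta(c_1,\nu+\psi_M) > 0\Bigr\},
\]
which contains the initial trace $(q_0(x),\rho_0(x))$ by the hypothesis of the theorem. I would compute $D\Phi/Dt$ on the boundary $\{\Phi = 0\}$ by substituting $q = (\lambda_M/c_1)\rho - \Theta(c_1,\nu+\psi_M)$ and invoking the two identities
\[
\lambda_M\,\Theta(c_1,\nu+\psi_M) = -k c_1, \qquad \Theta(c_1,\nu+\psi_M)^{2} + (\nu+\psi_M)\,\Theta(c_1,\nu+\psi_M) + k c_1 = 0,
\]
which come directly from \eqref{roots} with $\gamma=c_1,\ \beta=\nu+\psi_M$. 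The $\rho^{2}$-terms cancel automatically, the first identity makes the coefficient of $\rho$ vanish (the slope $\lambda_M/c_1$ is precisely chosen to force this), and the second identity reduces the constant part to
\[
\left.\frac{D\Phi}{Dt}\right|_{\Phi=0} = (\psi_M - \eta)\,\Theta(c_1,\nu+\psi_M) + k(c_1 - c),
\]
which is $\geq 0$ termwise since $\eta\leq \psi_M$, $c\geq c_1$, and $k<0$. The strict initial inequality $\Phi_0 > 0$ is then upgraded to strict forward invariance by a continuity argument applied to $\min_x \Phi(t,x)$.

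From $(q,\rho)\in\Sigma$ with $\rho \geq 0$ one reads off $q \geq -\Theta(c_1,\nu+\psi_M)$, whence $u_x = q - \nu - \eta \geq -\Theta(c_1,\nu+\psi_M) - \nu - \psi_M = -\lambda_M$. Plugging this into $D\rho/Dt = -\rho u_x \leq \lambda_M \rho$ gives $\|\rho(t,\cdot)\|_\infty \leq \|\rho_0\|_\infty e^{\lambda_M t}$. For the upper bound on $u_x$ I would run a second, simpler phase-plane argument for $q$ alone: when $q > 0$, the bounds $\eta \leq \psi_M$, $\rho \geq 0$, $-kc \leq -kc_2$ yield $Dq/Dt \leq -q^{2} + (\nu+\psi_M)q - kc_2$, whose right-hand side is negative for $q > \Omega(c_2,\nu+\psi_M)$. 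Hence $q \leq \max\{\max q_0,\ \Omega(c_2,\nu+\psi_M)\}$, and translating through $u_x = q - \nu - \eta \leq q - \nu - \psi_m$ with $\Omega(c_2,\nu+\psi_M) = \Theta(c_2,\nu+\psi_M) + \nu + \psi_M$ delivers the stated upper bound. Global-in-time regularity then follows from Theorem~\ref{local}. The principal obstacle I anticipate is the first step---engineering the cancellations that transform the differentiated momentum equation into the clean Riccati form for $q$, since the nonlocal commutators must combine exactly---after which the invariant-region and comparison arguments reduce to the algebraic manipulations sketched above.
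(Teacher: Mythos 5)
Your proof is essentially correct and uses the same core ingredients as the paper — the auxiliary quantity $G = u_x + \nu + \psi\ast\rho$ (your $q$), the Riccati ODE along characteristics, and the same line in the $(\rho,G)$-plane determined by the algebraic identities $\lambda_M\,\Theta(c_1,\nu+\psi_M)=-kc_1$ and $\Theta^2+(\nu+\psi_M)\Theta+kc_1=0$. Your derivation of $DG/Dt = -G\,u_x + k(\rho-c)$ and the cancellation of the nonlocal commutators with $-D(\psi\ast\rho)/Dt$ is exactly the reformulation the paper performs.

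Where you diverge is in the mechanism used to keep the solution in the good region. The paper transforms $(\rho,G)\mapsto(w,s)=(G/\rho,1/\rho)$ so that the evolution becomes linear (with time-dependent coefficient $\nu+\psi\ast\rho$), then introduces a constant-coefficient auxiliary linear system (Proposition \ref{phpl2}) whose saddle separatrix pins down the invariant half-plane, and finally transfers that conclusion back via a two-sided comparison lemma (Lemma \ref{comparison2}). You instead work directly in $(q,\rho)$-coordinates and verify invariance of $\{\Phi>0\}$ by computing $D\Phi/Dt$ on the boundary. The two arguments are geometrically the same: your region $\{\Phi\geq 0,\ \rho\geq 0\}$ is exactly the preimage of the paper's $\Sigma_{\gamma,\beta}=\{p\geq\lambda/\gamma-\mu q,\ q>0\}$ under $w=G/\rho,\ s=1/\rho$, and your ``automatic $\rho^2$ cancellation'' is the nonlinear-coordinates shadow of the fact that the transformed system is linear. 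Your route is shorter and avoids the separate auxiliary system and comparison lemma; the paper's route is more modular and reuses the same auxiliary-system proposition for the finite-time-breakdown theorem with minimal extra work. Both are valid.

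The one place you should be more careful is the invariance step itself. On $\{\Phi=0\}$ you obtain $D\Phi/Dt = (\psi_M-\eta)\Theta(c_1,\nu+\psi_M) + k(c_1-c) \geq 0$, which is only weakly nonnegative (it can vanish, e.g.\ when $c(x(t))=c_1$ and $\psi\ast\rho=\psi_M$), so a first-touching-time argument does not immediately give a contradiction. You need either (a) the observation that $D\Phi/Dt = -\Phi^2 + a(t)\Phi + b(t)$ with $b(t)\geq 0$ and $a(t)$ locally bounded (since $\rho$ is finite on any compact subinterval of the maximal existence interval), so $\Phi\equiv 0$ is a subsolution and $\Phi(0)>0$ forces $\Phi(t)>0$ by uniqueness/comparison for scalar ODEs; or (b) the paper's device of perturbing the initial data slightly into the open region and then passing to the limit, which sidesteps the borderline case. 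Your phrase ``continuity argument applied to $\min_x\Phi$'' gestures at this but does not by itself close the gap, and it is precisely the kind of borderline subtlety the paper's comparison lemma is structured to handle. Also note that the degenerate characteristic case $\rho_0(\alpha)=0$, which the paper treats separately before invoking the $(w,s)$ transformation (since $s=1/\rho$ is undefined there), is absorbed painlessly into your direct $(q,\rho)$ formulation — a small simplification your approach buys for free.
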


\begin{theorem}
\label{nonlocvarcftb}
Consider the system \eqref{backalign} with initial conditions \eqref{backalign2}. Let $k<0$, $c_2 = \max _{x\in\mathbb{T}} c(x)$ and $\psi_m = \min_{x\in\mathbb{R}}\psi(x)$. If $\exists x_0\in\mathbb{T}$ such that,
$$
u_{0x}(x_0)<\frac{\Omega(c_2,\nu+\psi_m)\rho_0(x_0)}{c_2}-\Theta(c_2,\nu+\psi_m)-\nu-\psi\ast\rho_0(x_0),
$$	
then $\inf\lim_{t\uparrow t_c}u_x(t,\cdot ) = -\infty$ for some finite $t_c$. 
\end{theorem}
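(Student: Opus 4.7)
The plan is to recast the theorem as a scalar linear differential inequality along the characteristic through $x_0$. Following the reformulation in Section~\ref{prelim1}, I will differentiate the momentum equation in \eqref{backalign}, substitute the Poisson relation $-\phi_{xx}=\rho-c$, and use mass conservation to obtain, along the characteristic $x(t)$ emanating from $x_0$, the closed nonlinear system
\begin{align*}
G' &= -G^{2}+\beta G + k\bigl(\rho-c(x(t))\bigr),\\
\rho' &= \rho(\beta-G),
\end{align*}
where $G:=u_x+\nu+\psi\ast\rho$ is the alignment-corrected gradient and $\beta:=\nu+\psi\ast\rho\ge\beta_m:=\nu+\psi_m$. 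Abbreviating $\Omega:=\Omega(c_2,\beta_m)$ and $\Theta:=\Theta(c_2,\beta_m)$, Vieta's relations give $\Omega\Theta=-kc_2$ and $\Omega-\Theta=\beta_m$, and the hypothesis rearranges to $G(0)<\tfrac{\Omega}{c_2}\rho_0(x_0)-\Theta$.

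The strategy is to linearize along the worst-case invariant line $G=\tfrac{\Omega}{c_2}\rho-\Theta$ of the autonomous system with frozen parameters $(c_2,\beta_m)$. Assuming first $\rho_0(x_0)>0$, I will introduce $w:=(G+\Theta)/\rho$. A direct computation -- in which the autonomous part collapses via $\Omega\Theta=-kc_2$ and $\Theta+\beta_m=\Omega$ -- yields
\[
w'=\Theta w+k+\frac{k(c_2-c)-\Theta(\psi\ast\rho-\psi_m)}{\rho}.
\]
Both correction terms are nonpositive ($c\le c_2$ with $k<0$, and $\psi\ast\rho\ge\psi_m$ with $\Theta\ge 0$), so $w'\le \Theta w+k$. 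The hypothesis reads precisely $w(0)<-k/\Theta=\Omega/c_2$; a scalar comparison with the explicit linear ODE $v'=\Theta v+k$, $v(0)=w(0)$, then forces $w(t)\le \bigl(w(0)-\tfrac{\Omega}{c_2}\bigr)e^{\Theta t}+\tfrac{\Omega}{c_2}\to -\infty$.

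To convert divergence of $w$ into finite-time breakdown, I will pick $t_1$ with $w(t_1)\le -1$. For $t\ge t_1$ the density equation reads $\rho'=\rho(\beta+\Theta)-w\rho^{2}\ge \Omega\rho+\rho^{2}\ge\rho^{2}$, a Riccati inequality that forces $\rho\to\infty$ by time $t_1+1/\rho(t_1)$. At the blow-up time, $\rho\to\infty$ while $w\le-1$ stays strictly negative, so $G=w\rho-\Theta\to-\infty$ and hence $u_x=G-\beta\to-\infty$, which is the claimed breakdown. The degenerate case $\rho_0(x_0)=0$ is handled separately: then $\rho\equiv 0$ along the characteristic and the hypothesis reduces to $G(0)<-\Theta$; the bounds $\beta\ge\beta_m$ and $c\le c_2$ give $G'\le -(G-\Omega)(G+\Theta)$, which drives standard scalar Riccati blow-up.

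The main obstacle is the $w'$-identity. A naive comparison with an autonomous reference system fails here because $c(x(t))$ and $\psi\ast\rho$ vary simultaneously along the characteristic, so a componentwise monotonicity argument runs into sign issues. The precise choice of $w$, anchored on the worst-case invariant line $G=\tfrac{\Omega}{c_2}\rho-\Theta$, is what kills the autonomous part of $w'$ and leaves only the signed error $k(c_2-c)-\Theta(\psi\ast\rho-\psi_m)$, collapsing the coupled nonautonomous problem into a single scalar linear differential inequality that can be compared explicitly.
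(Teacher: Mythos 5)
Your proof is correct, and it takes a genuinely different route from the paper's. The paper transforms $(\rho,G)$ into the two\hbox{-}dimensional variables $(w,s)=(G/\rho,1/\rho)$ satisfying the linear system \eqref{lin2}, then compares $(w,s)$ against an autonomous auxiliary linear system $(p,q)$ via a two-variable comparison argument (Lemma~\ref{comparison2}), and reads off finite-time breakdown from the saddle-point geometry of that system (Proposition~\ref{phpl2}), where the separatrix with eigenvalue $-\lambda$ delimits the invariant region $\Sigma_{\gamma,\beta}$. You instead collapse the problem to the single scalar quantity $w=(G+\Theta)/\rho$, which is exactly the affine coordinate anchored on the separatrix $G=\frac{\Omega}{c_2}\rho-\Theta$ of the frozen-coefficient system; the Vieta relations $\Omega\Theta=-kc_2$, $\Omega-\Theta=\beta_m$ make the autonomous part of $w'$ telescope, and the nonautonomous residuals $k(c_2-c)$ and $-\Theta(\psi\ast\rho-\psi_m)$ appear with a definite sign, giving the one-sided linear inequality $w'\le \Theta w+k$. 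Blow-up of $\rho$ then follows from a scalar Riccati comparison once $w\le -1$, rather than from the two-dimensional phase plane. Both proofs are quantitative, but yours replaces the invariant-region machinery with an elementary scalar ODE comparison and is noticeably shorter; the paper's formulation, on the other hand, produces the subcritical and supercritical halves from the same auxiliary linear system and directly yields an explicit estimate on the breakdown time from the closed-form solution of \eqref{auxlin2}. One small point you should make explicit in the degenerate case $\rho_0(x_0)=0$: the inequality $G'\le -(G-\Omega)(G+\Theta)$ amounts to $(\beta-\beta_m)G\le k(c-c_2)$, which needs $G\le 0$; this does hold because $G(0)<-\Theta<0$ and the phase-line dynamics keep $G$ decreasing, but the sign restriction is not automatic and deserves a sentence. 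Similarly, a brief remark that either the characteristic ODE breaks down before $t_1$ (in which case $G\to-\infty$ already by the local existence theory) or else $w(t_1)\le -1$ is reached would make the logical dichotomy fully explicit.
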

\begin{remark}
We would like to point out that if $\psi\equiv 0$ in the above theorems then we recover Theorem \ref{GS} and \ref{FTB},  respectively. In other words our analysis of the generalization to the case with alignment is optimal.   
Also, if $\psi(x)\equiv \psi$ is a constant, then we obtain the following two corollaries.
\end{remark}
\begin{corollary}
Consider the system \eqref{backalign}, initial conditions \eqref{backalign2} with $\psi(x)\equiv\psi$ (constant). Let $k<0$, $0<c_1 = \min_{x\in\mathbb{T}} c(x)$, $c_2 = \max _{x\in\mathbb{T}} c(x)$ and $\lambda_1 = \Omega(c_1,\nu+\psi)$. If $$
u_{0x}(x)> \frac{\lambda_1}{c_1}(\rho_0(x) - c_1) \; \forall x\in\mathbb{T},
$$	
then there exists a unique global-in-time solution $\rho,u\in C^1((0,\infty)\times\mathbb{T})$. 
Furthermore, we have the following bounds,
\begin{align*}
& ||\rho(t,\cdot)||_\infty\leq ||\rho_0||_\infty e^{\lambda_1 t},\\
& -\lambda_1\leq u_x(t,\cdot )\leq \max\{\max u_{0x}, \Theta(c_2,\nu+ \psi) \},
\end{align*}
\end{corollary}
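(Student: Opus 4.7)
The plan is to deduce this corollary as a direct specialization of Theorem \ref{nonlocvarcgs} once the alignment kernel is frozen to a constant. First I would record two immediate simplifications forced by $\psi(x)\equiv\psi$: we have $\psi_M=\psi_m=\psi$, so the quantity $\lambda_M$ appearing in Theorem \ref{nonlocvarcgs} coincides with the $\lambda_1=\Omega(c_1,\nu+\psi)$ stated here; and since the total mass is normalized to $\int_\mathbb{T}\rho_0(y)\,dy=1$, the convolution degenerates to $\psi\ast\rho_0(x)=\psi\int_\mathbb{T}\rho_0(y)\,dy=\psi$ for every $x\in\mathbb{T}$.

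With these two substitutions the subcritical condition from Theorem \ref{nonlocvarcgs} reads
\begin{equation*}
u_{0x}(x)>\frac{\lambda_1\rho_0(x)}{c_1}-\Theta(c_1,\nu+\psi)-\nu-\psi.
\end{equation*}
The crux of the reduction is the elementary identity $\Omega(\gamma,\beta)-\Theta(\gamma,\beta)=\beta$, which is immediate from the definitions in \eqref{roots}. Applied with $\gamma=c_1$ and $\beta=\nu+\psi$ it gives
\begin{equation*}
-\Theta(c_1,\nu+\psi)-(\nu+\psi)=-\Omega(c_1,\nu+\psi)=-\lambda_1,
\end{equation*}
so the right-hand side collapses to $\tfrac{\lambda_1}{c_1}(\rho_0(x)-c_1)$, matching the threshold in the corollary. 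The global regularity assertion and the density bound $\|\rho(t,\cdot)\|_\infty\leq\|\rho_0\|_\infty e^{\lambda_1 t}$, as well as the lower bound $u_x\geq-\lambda_1$, then transfer verbatim from Theorem \ref{nonlocvarcgs}.

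For the upper bound on $u_x$, Theorem \ref{nonlocvarcgs} provides the estimate $u_x(t,\cdot)\leq\max\{\max u_{0x},\Theta(c_2,\nu+\psi_M)\}+\psi_M-\psi_m$. Since $\psi_M-\psi_m=0$ when $\psi$ is constant, this reduces to $\max\{\max u_{0x},\Theta(c_2,\nu+\psi)\}$, which is precisely the claim. There is no substantive obstacle in this argument: one only needs to verify the $\Omega$-$\Theta$ identity and to invoke the mass normalization. All of the real analysis lies in the proof of Theorem \ref{nonlocvarcgs}, and the corollary serves to display the clean threshold in the degenerate case where the alignment kernel acts as an effective damping reinforcement $\nu\mapsto\nu+\psi$.
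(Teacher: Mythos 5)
Your proposal is correct and follows exactly the route the paper intends (the paper states the corollary without a written proof, as an immediate specialization of Theorem \ref{nonlocvarcgs}). The two key observations you isolate — that the mass normalization $\int_{\mathbb{T}}\rho_0=1$ forces $\psi\ast\rho_0\equiv\psi$, and that the identity $\Omega(\gamma,\beta)-\Theta(\gamma,\beta)=\beta$ collapses $-\Theta(c_1,\nu+\psi)-\nu-\psi$ to $-\lambda_1$ — are precisely what is needed to match the threshold and the bounds.
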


\begin{corollary}
Consider the system \eqref{backalign}, initial conditions \eqref{backalign2} with $\psi(x)\equiv\psi$ (constant). Let $k<0$, $c_2 = \max _{x\in\mathbb{T}} c(x)$. If $\exists x_0\in\mathbb{T}$ such that,
$$
u_{0x}(x_0)<\frac{\Omega(c_2,\nu+\psi)}{c_2}(\rho_0(x_0)-c_2),
$$	
then $\inf\lim_{t\uparrow t_c}u_x(t,\cdot ) = -\infty$ for some finite $t_c$. 
\end{corollary}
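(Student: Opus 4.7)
The plan is to deduce this corollary as a direct specialization of Theorem \ref{nonlocvarcftb} to the constant-kernel case $\psi(x)\equiv\psi$. The strategy is in three short steps: simplify the hypothesis of Theorem \ref{nonlocvarcftb} under $\psi$ constant, apply a simple algebraic identity relating $\Omega$ and $\Theta$, and invoke the theorem's blow-up conclusion. No new analytic work should be required, since Theorem \ref{nonlocvarcftb} is assumed.

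First I would observe that $\psi\equiv\psi$ forces $\psi_m=\psi_M=\psi$, so the relevant root in Theorem \ref{nonlocvarcftb} becomes $\Omega(c_2,\nu+\psi_m)=\Omega(c_2,\nu+\psi)$. Using the mass normalization $\int_\mathbb{T}\rho_0(y)\,dy=1$, the convolution collapses to
$$
\psi\ast\rho_0(x_0)=\psi\int_\mathbb{T}\rho_0(y)\,dy=\psi.
$$
Substituting these into the hypothesis of Theorem \ref{nonlocvarcftb}, it reads
$$
u_{0x}(x_0)<\frac{\Omega(c_2,\nu+\psi)\rho_0(x_0)}{c_2}-\Theta(c_2,\nu+\psi)-\nu-\psi.
$$

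Next I would use the elementary identity $\Omega(\gamma,\beta)-\Theta(\gamma,\beta)=\beta$, which is immediate from the definitions in \eqref{roots}. Applied with $\gamma=c_2$ and $\beta=\nu+\psi$, this gives $-\Theta(c_2,\nu+\psi)-\nu-\psi=-\Omega(c_2,\nu+\psi)$, so the right-hand side above rearranges to
$$
\frac{\Omega(c_2,\nu+\psi)\rho_0(x_0)}{c_2}-\Omega(c_2,\nu+\psi)=\frac{\Omega(c_2,\nu+\psi)}{c_2}\bigl(\rho_0(x_0)-c_2\bigr),
$$
which is precisely the hypothesis of the corollary. Thus the corollary's condition is identical to Theorem \ref{nonlocvarcftb}'s condition in this setting, and the conclusion $\inf\lim_{t\uparrow t_c}u_x(t,\cdot)=-\infty$ at some finite $t_c$ follows at once.

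The only thing to be careful about is the bookkeeping: making sure the normalization $\int_\mathbb{T}\rho_0=1$ is being used to reduce the convolution and that the identity $\Omega-\Theta=\beta$ is applied with the correct arguments. There is no genuine obstacle beyond these routine checks; the content of the corollary is entirely subsumed by Theorem \ref{nonlocvarcftb}.
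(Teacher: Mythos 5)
Your proposal is correct and is exactly the intended derivation: the paper states the corollary as an immediate specialization of Theorem \ref{nonlocvarcftb} to a constant kernel, and your three steps (using $\psi_m=\psi$, collapsing $\psi\ast\rho_0=\psi$ via the mass normalization, and the identity $\Omega(\gamma,\beta)-\Theta(\gamma,\beta)=\beta$ from \eqref{roots}) reproduce that reduction precisely.
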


\section{Euler-Poisson systems with variable background}
\label{epwoalign}
\subsection{Critical thresholds for an auxiliary system} 
The main tool in dealing with the variable background is the use of comparison. To this end,  we introduce an auxiliary ODE system corresponding to \eqref{nonlin},
\begin{subequations}
\label{auxnonlin}
\begin{align}
& \eta'=-\eta\xi,\label{auxnonlin1}\\
& \xi' = -\xi^2-\nu\xi + k\eta - k\gamma. \label{auxnonlin2}
\end{align}
\end{subequations}
where $\gamma\geq 0$ is a parameter. Hence, $\eta ,\xi$ are functions of time as well as the parameter $\gamma $. However, we will omit the latter dependence on the parameter whenever it is clear from context. We will make use of the phase plane analysis technique introduced in \cite{BL20} to prove a proposition for this auxiliary problem which will play a crucial role in proving the theorems stated.
\begin{proposition}
\label{phpl}
Consider the ODE system \eqref{auxnonlin} with initial conditions $(\eta(0)\geq 0,\xi(0))$, then $0\leq\eta(t)$ and $\xi(t)\leq\max\{ \xi(0),\Theta(\gamma,\nu) \}$ for all $t>0$.  The solution exists globally for all $t>0$ with 
$$\eta(t) \leq \eta(0)e^{\lambda t},\ 
and\ -\lambda \leq \xi(t),
$$ 
if and only if 
$$
\xi(0) \geq \frac{\lambda}{\gamma}(\eta(0)-\gamma).
$$
Here
$\lambda = \Omega(\gamma,\nu)$. Moreover, if 
$$
\xi(0) < \frac{\lambda}{\gamma}(\eta(0)-\gamma),
$$
then $\lim_{t\to t_c^-}\eta(t)= -\lim_{t\to t_c^-}\xi(t) =\infty $ for some $t_c>0$.
\end{proposition}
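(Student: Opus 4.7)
The plan is a phase-plane analysis of \eqref{auxnonlin} centered on the observation that the line $L:\xi=\tfrac{\lambda}{\gamma}(\eta-\gamma)$ is an invariant trajectory. A direct substitution verifies this using the identity $\lambda^2-\nu\lambda+k\gamma=0$, i.e., the fact that $\lambda=\Omega(\gamma,\nu)$ is a root of the relevant quadratic. Before exploiting $L$, I would record the free bounds: $\eta(t)=\eta(0)\exp(-\int_0^t\xi\,ds)\geq 0$, and dropping the non-positive term $k\eta$ in \eqref{auxnonlin2} gives $\xi'\leq -(\xi-\Theta)(\xi+\lambda)$, whence $\xi(t)\leq\max\{\xi(0),\Theta(\gamma,\nu)\}$.

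For the subcritical half, uniqueness of ODE solutions prevents any trajectory from crossing $L$, so the sign of $\Psi:=\xi-\tfrac{\lambda}{\gamma}(\eta-\gamma)$ is preserved along orbits. When $\Psi(0)\geq 0$, I would chain the inequalities $\xi\geq\tfrac{\lambda}{\gamma}\eta-\lambda\geq -\lambda$ (using $\eta\geq 0$) and $\eta'=-\eta\xi\leq\lambda\eta$ to deduce $-\lambda\leq\xi(t)$ and $\eta(t)\leq \eta(0)e^{\lambda t}$. Combined with the upper bound on $\xi$ already noted, these a priori bounds rule out blow-up and yield the asserted global solution.

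The supercritical case $\Psi(0)<0$ is the harder direction. In the degenerate case $\eta(0)=0$, the $\xi$-equation reduces to a pure Riccati $\xi'=-(\xi-\Theta)(\xi+\lambda)$ with $\xi(0)<-\lambda$, which blows up at once, so assume $\eta(0)>0$. I would then introduce the ratio $r:=(\xi+\lambda)/\eta$. A direct computation, in which the identity $\lambda^2-\nu\lambda+k\gamma=0$ cancels the $1/\eta$-terms, yields the linear ODE
\[
r'=\Theta r+k,
\]
with explicit solution $r(t)=\tfrac{\lambda}{\gamma}+\bigl(r(0)-\tfrac{\lambda}{\gamma}\bigr)e^{\Theta t}$. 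Since $\Psi(0)<0$ is equivalent to $r(0)<\lambda/\gamma$, the coefficient of $e^{\Theta t}$ is negative and $\Theta>0$, so $r(t)\to-\infty$; in particular there exists a finite $t_0\geq 0$ at which $\xi(t_0)=-\lambda$, with $\xi(t)<-\lambda$ for $t>t_0$. Setting $V:=-(\xi+\lambda)>0$ thereafter, a short calculation gives $V'=V^2+(2\lambda-\nu)V-k\eta\geq V^2$, and a standard Riccati comparison drives $V$ (hence $-\xi$) to $+\infty$ in finite time; then $\eta'=-\eta\xi$ forces $\eta\to+\infty$ simultaneously.

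The step I expect to be least automatic is the discovery of the substitution $r=(\xi+\lambda)/\eta$: the reduction to a linear equation depends on the specific cancellation produced by $\lambda^2-\nu\lambda+k\gamma=0$, so the ansatz is tailored to this problem rather than generic. Once that identity is in hand, the remaining work reduces to monotone and Riccati comparison.
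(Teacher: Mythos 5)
Your argument is correct and reaches the stated conclusion, but it takes a genuinely different, somewhat more elementary route than the paper's. The paper handles both directions by passing to the $2\times2$ linear system in $(r,s)=(\xi/\eta,1/\eta)$, identifying the saddle at $(0,1/\gamma)$, reading the subcritical set off as the invariant region bounded by the stable separatrix, and exhibiting an explicit solution formula for $s(t)$ together with a bound on the time at which $s$ hits zero. You instead work directly in $(\eta,\xi)$ for the subcritical half: the line $L$ passing through the critical points $(\gamma,0)$ and $(0,-\lambda)$ is shown to be a union of trajectories, so forward/backward uniqueness preserves the sign of $\Psi=\xi-\tfrac{\lambda}{\gamma}(\eta-\gamma)$, and elementary comparison then yields the a priori bounds and global existence with no change of variables at all. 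For the supercritical half you collapse the linearization to the scalar ODE $r'=\Theta r+k$ in the diagonalized ratio $r=(\xi+\lambda)/\eta$, forcing $\xi$ below $-\lambda$ in finite time, and then finish with a Riccati comparison $V'\ge V^2$ for $V=-(\xi+\lambda)$. The trade is roughly: your version avoids analyzing a $2$D linear system and its eigenstructure, while the paper's version produces an explicit upper bound on the breakdown time $t_c$, which yours does not.

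Two small spots deserve tightening. First, before invoking the formula for $r(t)$ you should record that while $\xi\ge-\lambda$ the a priori bounds ($0\le\eta\le\eta(0)e^{\lambda t}$, $-\lambda\le\xi\le\max\{\xi(0),\Theta\}$) forbid breakdown, so the maximal existence interval necessarily reaches the first time $t_0$ at which $r$ vanishes; otherwise the claim that $\xi$ attains $-\lambda$ is not justified. Second, the phrase ``$\eta'=-\eta\xi$ forces $\eta\to+\infty$ simultaneously'' is not by itself a proof, since $\xi\to-\infty$ alone does not control $\eta$. You already hold the right tool: since $r(t)=\tfrac{\lambda}{\gamma}+\bigl(r(0)-\tfrac{\lambda}{\gamma}\bigr)e^{\Theta t}$ converges to a finite negative value $r_c$ as $t\to t_c^-$, and $\xi+\lambda\to-\infty$, the identity $\eta=(\xi+\lambda)/r$ gives $\eta\to+\infty$. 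Both fixes are immediate, and with them the proof is complete.
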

\begin{proof} Note that from \eqref{auxnonlin1}, we have $\eta(t) = \eta(0)e^{-\int_0^t \xi\, d\tau}$ 
and hence, if $\eta(0)>0$ then $\eta(t)>0$ for all $t>0$ as long as the solution exists 
and $\eta(0)=0\implies \eta(t)\equiv 0$. Hence, $\eta$ maintains sign.  For a uniform upper bound on $\xi$, note that since $\eta\geq 0$, from \eqref{auxnonlin2},
\begin{align*}
\xi' & \leq -\xi^2 - \nu\xi - k\gamma\\
& = -(\xi + \lambda)(\xi-\mu).
\end{align*}
 with $\lambda = \Omega(\gamma,\nu)$ and $\mu = \Theta(\gamma,\nu)$ satisfying $-\lambda\leq 0 \leq\mu$. Comparing the above inequality with \eqref{extra1}, we obtain $\xi(t)\leq \max\{ \xi(0), \mu \}$ for all $t>0$. 

We first consider the case when $\eta(0)=0\equiv\eta(t)$. 
Then from \eqref{auxnonlin2}, 
\begin{align}
\label{extra1}
\xi' = -\xi^2-\nu\xi - k\gamma =: -(\xi + \lambda)(\xi-\mu).
\end{align}
Using phase line analysis, we have that $\xi(t)$ exists for all time if and only if 
$\xi(0) \geq -\lambda$. In the case of global solution, we have 
$$
 -\lambda \leq \xi(t) \leq \max\{\xi(0), \mu\}.  
$$
If $\xi(0)< -\lambda$, $\xi(t)$ tends to $-\infty$ at a finite time $t_c$. Such time may be determined from 
the solution formula of form 
$$
\frac{\xi-\mu}{\xi+\lambda}=\frac{\xi(0)-\mu}{\xi(0)+\lambda}e^{(\lambda+\mu)t},
$$
we have 
$$
t_c=\frac{1}{\mu+\lambda} \log \left| \frac{\xi(0)+\lambda}{\xi(0)-\mu}\right|.
$$
As a result of the above discussion, we can now assume $\eta(0)>0$ which in turn implies $\eta(t)>0$ for all $t>0$. We proceed to introduce the transformation 
$$
r:=\xi/\eta, \quad s:=1/\eta,
$$
 so that \eqref{auxnonlin} is transformed to the following linear system,
\begin{subequations}
\label{lin}
\begin{align}
& r'=-\nu r + k - k\gamma s,\label{lin11}\\
& s' = r  \label{lin12}
\end{align}
\end{subequations}
with initial data $r(0):=\xi(0)/ \eta(0)$ and $s(0) := 1/\eta(0)>0$. This is a linear ODE system, its solution 
$(r,s)$ will remain bounded for all time. This fact when combined with the transformation says that $(\xi, \eta)$
exists globally if and only if $s(t)>0$ for all time.  
Therefore, the key here is to identify critical thresholds for initial data to ensure $s(t)>0$ for all positive times. 

We move onto analysing \eqref{lin}. It is a linear system with the critical point $(0,1/\gamma)$ being the saddle point. Written in matrix form, the system is:
$$
\begin{bmatrix}
r\\
s-1/\gamma	
\end{bmatrix}' = 
\begin{bmatrix}
-\nu & -k\gamma \\
1 & 0	
\end{bmatrix}
\begin{bmatrix}
r\\
s-1/\gamma	
\end{bmatrix}
$$
The coefficient matrix has eigenvalues $-\lambda$ and $\mu$. 
Hence the general solution to this system is,
\begin{align}
\label{linsystemexp}
\begin{aligned}
& \begin{bmatrix}
r\\
s-\frac{1}{\gamma}	
\end{bmatrix} = A
\begin{bmatrix}
-\lambda\\
1	
\end{bmatrix} e^{-\lambda t}
+ B
\begin{bmatrix}
\mu\\
1	
\end{bmatrix} e^{\mu t}.	
\end{aligned}
\end{align}
From the flow of solution trajectories we see that the separatrix with incoming trajectories serves to divide the upper half plane($s>0$) into two invariant regions, one of which has the property that if $s(0)>0$,  then $s(t)>0$ for all $t>0$. 

Such separatrix corresponds to the special solutions with $B=0$, i.e.,
\begin{align*}
& \begin{bmatrix}
r\\
s-\frac{1}{\gamma}	
\end{bmatrix} = A
\begin{bmatrix}
-\lambda\\
1	
\end{bmatrix} e^{-\lambda t}.	
\end{align*}
Consequently, this trajectory equation is, 
$$
\gamma r = \lambda (1-\gamma s). 
$$ 
\begin{figure}[H]
 \centering
 \includegraphics[width=0.6\linewidth]{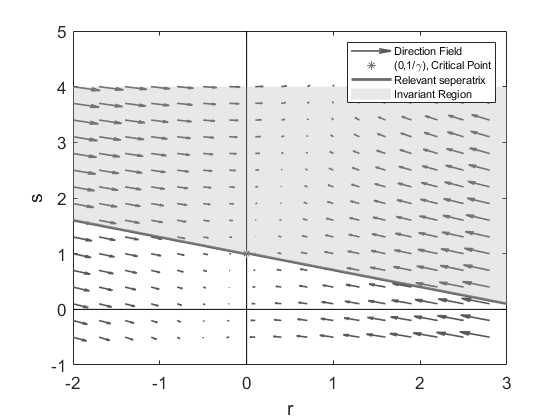}
 \caption{Direction field for reduced linear system along with the invariant region.($\nu =3,k=-1,\gamma =1$)}
 \label{fig1}
\end{figure}
Thus the above mentioned region can be characterized by 
$$
\Sigma_\gamma := \left\{(r,s): \gamma r\geq \lambda (1-\gamma s),\ s> 0 \right\}.
$$
In order to see this is an invariant region, we only need to show that on $\partial \Sigma_\gamma \cap\{s=0\}$ the trajectories go into the region. Note that the $r-$intercept of the separatrix is $(\lambda /\gamma) >0$, 
hence for $r\geq \lambda /\gamma$ and $s=0$ we have $s'=r> 0$ and the trajectory travels upwards.
 
Moving back to the original variables $(\eta, \xi)$, 
$\Sigma_\gamma$ transforms to 
$$
\widetilde{\Sigma}_\gamma :=\left\{ (\eta,\xi):\xi\geq \frac{\lambda}{\gamma}(\eta-\gamma),\ \eta>0 \right\}.
$$
Likewise, if $(\eta(0),\xi(0))\in\widetilde{\Sigma}_\gamma$, then $\eta(t)<\infty$ and $\xi(t)\geq\frac{\lambda}{\gamma}(\eta(t)-\gamma)\geq-\lambda$ for all $t>0$.

Now suppose $(r(0),s(0)>0)\notin\Sigma_\gamma$. 
Since, the linear ODE system has only one critical point, we have that $\lim_{t\to\infty}(|r(t)|,s(t))= (\infty,-\infty)$. Hence, the solution crosses $s=0$ line at some finite time, $t_c$.  

We will now derive an upper bound on $t_c$. Using the general solution (\ref{linsystemexp}) we can find the solution formula,  
$$
s(t) = \frac{1}{\gamma} + \frac{\mu}{(\lambda+\mu)}\left( s(0)-\frac{1}{\gamma}-\frac{r(0)}{\mu} \right)e^{-\lambda t} + \frac{\lambda}{(\lambda+\mu)}\left(s(0)-\frac{1}{\gamma}+\frac{r(0)}{\lambda} \right)e^{\mu t}.
$$
Assuming the finite time breakdown condition, i.e., $s(0)-\frac{1}{\gamma}+\frac{r(0)}{\lambda} <0$, then 
$$
s(t)\leq \frac{1}{\gamma} + \left( s(0)+\frac{1}{\gamma}+\frac{|r(0)|}{\mu} \right) - \frac{\lambda}{(\lambda+\mu)}\left|s(0)-\frac{1}{\gamma}+\frac{r(0)}{\lambda} \right|e^{\mu t}.
$$
Consequently, $s(t_c) = 0$ for some 
$$
t_c\leq \frac{1}{\mu}\ln\left( \frac{(\lambda+\mu)}{\lambda}\left( \frac{s(0)+\frac{2}{\gamma}+ \frac{|r(0)|}{\mu}}{\left| s(0)-\frac{1}{\gamma}+\frac{r(0)}{\lambda} \right|} \right) \right).
$$
And therefore, if $(\eta(0),\xi(0))\notin\widetilde{\Sigma}_\gamma$, then $\lim_{t\to t_c^-}\eta(t) =\infty$. And from \eqref{auxnonlin1}, we obtain $\lim_{t\to t_c^-}\xi(t) =-\infty$. 
This completes the proof to the proposition. 
\end{proof}
For the case when $c(x)\equiv c$ in \eqref{EPMain}, we can apply Propostion \ref{phpl} with $\gamma=c$ to \eqref{nonlin} and immediately obtain Theorem \ref{cconst}.

\subsection{Comparison lemma} 
We are now in position to present the comparison result. 
\begin{lemma}[Comparison lemma]\label{comparison} Let $(\rho, d)$ be the solution to \eqref{nonlin}, and $(\eta, \xi)$
be solution of \eqref{auxnonlin}.  Then as long as these solutions exist, we have\\ 
(i) For $\gamma=\min_\mathbb{T} c(x)$:  If $\rho(0)<\eta(0)$ and $d(0)>\xi(0)$, then 
$$
\rho(t)<\eta(t),\quad d(t) >\xi(t).
$$
(ii) For $\gamma=\max _\mathbb{T} c(x)$:  If $\rho(0)> \eta(0)$ and $d(0)< \xi(0)$, then 
$$
\rho(t)>\eta(t),\quad d(t) <\xi(t).
$$
\end{lemma}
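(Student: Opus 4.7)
The plan is to work with the differences $\phi := \eta - \rho$ and $\psi := d - \xi$ directly, derive a \emph{linear} non-autonomous ODE system they satisfy, and exploit the Metzler structure of that system together with a sign-definite forcing term coming from the choice of $\gamma$. Subtracting \eqref{nonlin11} from \eqref{auxnonlin1} and \eqref{nonlin12} from \eqref{auxnonlin2}, and substituting $\eta = \rho + \phi$, $d = \xi + \psi$, a short computation yields
\[
\phi' = -\xi\,\phi + \rho\,\psi, \qquad \psi' = -k\,\phi - (d+\xi+\nu)\,\psi \;-\; k\bigl(c(x(t)) - \gamma\bigr).
\]
The off-diagonal entries of the coefficient matrix are $\rho \geq 0$ and $-k > 0$, so it is Metzler. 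Moreover, the inhomogeneous term $-k(c(x(t))-\gamma)$ is nonnegative \emph{precisely} because $\gamma = c_1 = \min c$ in case (i). Case (ii) reduces to an identical structure after the sign flip $(\hat\phi,\hat\psi) := (-\phi,-\psi)$: the resulting system has the same Metzler coefficient matrix, and the forcing becomes $k(c(x(t)) - c_2) \geq 0$ since now $c \leq \gamma = c_2$.

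Given this Metzler-plus-nonnegative-forcing structure, the open positive cone $\{\phi > 0,\ \psi > 0\}$ is invariant under the linear flow. To make this rigorous I would suppose, for contradiction, that $t^* > 0$ is the first time $(\phi,\psi)$ reaches $\partial\mathbb{R}_+^2$ and inspect the boundary sub-cases. If $\phi(t^*)=0$ and $\psi(t^*)>0$, then $\phi'(t^*) = \rho(t^*)\psi(t^*) > 0$ whenever $\rho(t^*)>0$; the degenerate branch $\rho \equiv 0$ is ruled out separately, since then $\phi = \eta$ which is strictly positive by Proposition \ref{phpl}. If $\phi(t^*)>0$ and $\psi(t^*)=0$, then $\psi'(t^*) = -k\phi(t^*) - k(c(x(t^*))-\gamma) \geq -k\,\phi(t^*) > 0$. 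Each instance contradicts the minimality of $t^*$.

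The one genuinely delicate subcase is the corner $\phi(t^*)=\psi(t^*)=0$, where both boundary derivative computations collapse; this is the main obstacle. I would dispose of it by examining the scalar quantity $\theta := \phi + \psi$, which by the linear system above satisfies a differential inequality of the form $\theta' \geq -C\,\theta$ on any compact subinterval of the common existence time, with $C$ depending only on uniform bounds of $\rho$, $d$, $\xi$ (which follow from \eqref{nonlin} and Proposition \ref{phpl}). Gronwall then gives $\theta(t) \geq \theta(0)\,e^{-Ct} > 0$, which is incompatible with simultaneous vanishing at $t^*$. Once the corner is excluded, the three boundary arguments above propagate the strict inequalities for as long as both solutions exist, proving (i); case (ii) follows by the sign flip described.
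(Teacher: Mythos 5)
Your argument is correct, and it takes a genuinely different route from the paper's. The paper argues by contradiction at the first violation time $t_1$, but rather than inspecting boundary sub-cases, it first integrates \eqref{nonlin11} and \eqref{auxnonlin1} explicitly: since $d>\xi$ on $[0,t_1)$, the formulas $\rho(t_1)=\rho(0)e^{-\int_0^{t_1}d}$ and $\eta(t_1)=\eta(0)e^{-\int_0^{t_1}\xi}$ give $\rho(t_1)<\eta(t_1)$ \emph{automatically}, so the only inequality that can fail first is $d(t_1)=\xi(t_1)$; the paper then computes $(d-\xi)'(t_1)=k(\rho-\eta)(t_1)-k(c-\gamma)>0$ to obtain the contradiction. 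This makes your corner case $\phi(t^*)=\psi(t^*)=0$ simply not arise, so no Gronwall estimate is needed. Your reformulation in the difference variables $(\phi,\psi)$ has the virtue of making the structural reason transparent — a linear non-autonomous Metzler system with sign-definite forcing — and would adapt cleanly to related comparison lemmas; the cost is the extra machinery (the Gronwall bound on $\theta=\phi+\psi$, plus uniform bounds on $\rho,d,\xi$ on the compact subinterval) to dispose of the corner that the paper eliminates for free. Two small imprecisions worth fixing if you keep your version: the strict positivity $\eta(t)>0$ when $\eta(0)>0$ is established inside the proof of Proposition \ref{phpl} (the statement only asserts $\eta\geq 0$), and the bounds on $\rho,d,\xi$ you invoke for Gronwall come simply from continuity on the compact interval $[0,t^*]$, not from Proposition \ref{phpl}.
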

\begin{proof}
We will show $(i)$. Similar arguments follow for $(ii)$. We will argue by contradiction. To this end, let $t_1$ be the first time when the result is violated. Integrate \eqref{nonlin11} and \eqref{auxnonlin1} respectivley to get $\rho(t) = \rho(0)e^{-\int_0^t d\, d\tau}$ and $\eta(t) = \eta(0)e^{-\int_0^t\xi\, d\tau}$. Since $\int_0^{t_1}d\, d\tau > \int_0^{t_1}\xi\, d\tau $ and $\rho(0)<\eta(0)$, we obtain 
$$
\rho(t_1) = \rho(0)e^{-\int_0^{t_1} d\, d\tau}<\eta(0)e^{-\int_0^{t_1} \xi\, d\tau} = \eta(t_1).
$$
We can then conclude that $d(t_1) = \xi(t_1)$ along with $\rho(t_1)<\eta(t_1)$. Subtracting \eqref{auxnonlin2} from \eqref{nonlin12}, we obtain,
$$
(d-\xi)' = -(d+\xi+\nu)(d-\xi) + k(\rho-\eta) -k(c-\gamma).
$$
Plugging in $t=t_1$ and taking $\gamma = \min c$, we obtain
\begin{align*}
(d-\xi)'(t_1) & = k(\rho(t_1)-\eta(t_1))-k(c-\min c)\\
& >0.
\end{align*}
This is a contradiction because this implies that for all $t<t_1$ sufficiently close, $d(t)<\xi(t)$ and hence, $t_1$ cannot be the first time of violation. 
\end{proof}
\subsection{Proofs of Theorems \ref{GS} and \ref{FTB}} Using the tools developed above, we are now ready to prove our main results.\\ 
\textit{Proof of Theorem \ref{GS}:} Consider \eqref{nonlin} along a fixed characteristic and \eqref{auxnonlin} for $\gamma = c_1$. From hypothesis of theorem, we see that $d(0)>\frac{\lambda_1}{c_1}(\rho(0)-c_1)$. As a result we can choose $\eta(0)>\rho(0)$ and $\xi(0)<d(0)$ such that
$$
d(0)>\xi(0)\geq \frac{\lambda_1}{c_1}(\eta(0)-c_1)> \frac{\lambda_1}{c_1}(\rho(0)-c_1).
$$
Applying Lemma \ref{comparison}, we obtain
$$
\rho(t)<\eta(t),\ and\ d(t)>\xi(t), 
$$
for as long as these functions exist. Using Propostion \ref{phpl}, we obtain that $\rho(t)<\eta(0)e^{\lambda_1 t}$ and $d(t)>-\lambda_1$ for all $t>0$. However, note that $\eta(0)$ can be chosen to be greater than but arbitrarily close to $\rho(0)$ and all the above arguments still hold. Therefore, in the limit, we have $\rho(t)\leq \rho(0)e^{\lambda_1 t}$. Also, a uniform upper bound on $d$ can be obtained in a similar fashion as in Proposition \ref{phpl}. From \eqref{nonlin12},
\begin{align*}
d'  & \leq -d^2-\nu d-kc_2\\
& = -(d+\Omega(c_2,\nu))(d-\Theta(c_2,\nu)).
\end{align*}
Hence, $d(t)\leq \max\{ d(0),\Theta(c_2,\nu) \}$. \\
Collecting all the characterstics, we finally obtain,
\begin{align*}
& ||\rho(t,\cdot)||_\infty\leq ||\rho_0||_\infty e^{\lambda_1 t},\\
& -\lambda_1\leq u_x(t,\cdot)\leq \max\{ || u_{0x}||_\infty, \Theta(c_2,\nu) \}.
\end{align*}
This concludes the proof of Theorem \ref{GS}.\qed\\

\textit{Proof of Theorem \ref{FTB}:} Consider \eqref{nonlin} with $\rho(0) = \rho_0(x_0)$, $d(0)=u_{0x}(x_0)$ for $x_0$ as in the statement of the theorem. In \eqref{auxnonlin}, let $\gamma = c_2$. From the hypothesis of the theorem, we see that $d(0)<\frac{\lambda_2}{c_2}(\rho(0)-c_2)$. We can then choose $\eta(0)<\rho(0)$ and $\xi(0)>d(0)$ such that,
$$
d(0)<\xi(0)<\frac{\lambda_2}{c_2}(\eta(0)-c_2)<\frac{\lambda_2}{c_2}(\rho(0)-c_2).
$$
Applying Lemma \ref{comparison}, we obtain
$$
\rho(t)>\eta(t),\ and\  d(t)<\xi(t). 
$$
Using Propostion \ref{phpl}, we obtain that for some $t_c>0$, $\rho(t)\to\infty$ as $t\to t_c^-$. And from \eqref{nonlin11}, $\lim_{t\to t_c^-} d(t) = -\infty$ and the solution ceases to be $C^1$. \qed

\section{EPA systems with variable background}
\label{varbackalign}
\subsection{Reformulations} 
Set $G :=u_x+\nu +\psi\ast\rho$. Taking derivative of $G$ along 
$$
	' = \frac{\partial}{\partial t} + u\frac{\partial}{\partial x},
$$
we have,
\begin{align*}
G' & = (u_t)_x + \psi\ast\rho_t + u(u_x + \psi\ast\rho)_x\\
& = (-uu_x -\nu u + \psi\ast(\rho u)-u\psi\ast\rho -k\phi_x)_x-(\psi\ast(\rho u))_x + u(u_x + \psi\ast\rho)_x\\
& = -G u_x + k(\rho - c)\\
& = -G(G -\nu -\psi\ast\rho) + k(\rho -c). 
\end{align*}
We used \eqref{backalign} to obtain the second and third equations. Consequently, along the particle path given by,
$$
	\Gamma=\{(t,x)|\;  x'(t)=u(t,x(t)), x(0)=\alpha \in \mathbb{T}\},
$$
we get the following ODE system,
\begin{subequations}\label{nonlin2}
	\begin{align}
	& \rho' = -\rho(G-\nu-\psi\ast\rho), \label{nonlin21} \\
	& G' = -G(G-\nu-\psi\ast\rho) + k(\rho -c(x(t)) ), \label{nonlin22}
	\end{align}
	\end{subequations}
with initial condition,
$$
\rho(0)=\rho_0(\alpha ),\quad G(0)= u_{0x}(\alpha )+\nu+\psi\ast\rho_0(\alpha).
$$
The roadmap to the proofs of the main theorems will be similar to the previous section. However, due to the addition of the non local term, here  we first transform the ODE system \eqref{nonlin2} into a simple system, and then introduce an auxiliary ODE system which can be used for comparison. 
And eventually we use these tools to prove our main results.  
 
Note that the transformation will require $\rho(t)>0$ as long as the solution exists, this is ensured by assuming $\rho(0)>0$. In fact,  from \eqref{nonlin21}, we have that $\rho$ maintains sign, hence the zero case can be handled separately.
 
Next, we use the following transformation of variables for the case $\rho>0$, 
\begin{align}
\label{transformation2}
w = \frac{G}{\rho},\qquad s = \frac{1}{\rho},
\end{align}
to derive an ODE system for $w$ and $s$. Differentiating $w$,
\begin{align*}
\left(\frac{G}{\rho}\right)' & = -\frac{\rho'}{\rho^2} +\frac{G'}{\rho}\\
& = \frac{(G-\nu-\psi\ast\rho)G}{\rho} - \frac{(G-\nu-\psi\ast\rho)G}{\rho} + k\frac{(\rho - c)}{\rho} \\
& = k-kcs. 	
\end{align*}
Likewise, we differentiate $s$,
\begin{align*}
\left(\frac{1}{\rho}\right)' & = \frac{(G-\nu-\psi\ast\rho)}{\rho}\\
& = w - \nu s-s\psi\ast\rho. 	
\end{align*}
We then obtain the following ODE system,
\begin{subequations}
\label{lin2}
\begin{align}
& w' = k - kcs, \label{lin21} \\
& s' = w-(\nu +\psi\ast\rho)s, \label{lin22} 
\end{align}
\end{subequations}
with initial conditions 
$$
w(0) := \frac{G(0)}{\rho(0)} \;  \text{and} \;  s(0):=\frac{1}{\rho(0)}.
$$ 
\subsection{Threshold analysis for the auxiliary system} 
Corresponding to \eqref{lin2}, we introduce the following auxiliary system,
\begin{subequations}
\label{auxlin2}
\begin{align}
& p' = k - k\gamma q, \label{auxlin21} \\ 
& q' = p - \beta q, \label{auxlin22}
\end{align}	
\end{subequations}
where $\gamma\geq 0$, $\beta\geq\nu$ are parameters and initial conditions $(p(0),q(0)>0)$. Hence, $p ,q$ are functions of time as well as the parameters $\gamma ,\beta $. However, we will omit the latter dependence on the parameters whenever it is clear from context. We have the following proposition.
\begin{proposition}
\label{phpl2}
For the system \eqref{auxlin2}, with initial conditions $(p(0),q(0)>0)$, we have that $q(t)>0$ for all $t>0$ if and only if 
$$
p(0)\geq \frac{\lambda}{\gamma} - \mu q(0),
$$
where $\lambda = \Omega (\gamma,\beta)$ and $\mu = \Theta(\gamma,\beta)$. Additionally, if the above inequality holds, then it holds for all times, i.e.,
$$
p(t)\geq \frac{\lambda}{\gamma}-\mu q(t),\quad \forall t>0.
$$
\end{proposition}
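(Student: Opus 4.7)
The plan is to mirror the phase plane analysis of Proposition \ref{phpl}, working with the shifted linear system about the unique critical point $(p^*,q^*) = (\beta/\gamma, 1/\gamma)$ of \eqref{auxlin2}. The coefficient matrix of the linearization is $\left(\begin{smallmatrix} 0 & -k\gamma \\ 1 & -\beta \end{smallmatrix}\right)$, whose characteristic polynomial $x^2 + \beta x + k\gamma = 0$ has roots $-\lambda$ and $\mu$. Since $k<0$ these roots are real with opposite signs, so $(p^*,q^*)$ is a saddle; moreover, from Vieta's formulas we have the two identities $\mu\lambda = -k\gamma$ and $\lambda - \beta = \mu$, which will drive all subsequent algebra.

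The main step is to introduce the two eigen-combinations
\[
h(t) := p(t) + \mu q(t) - \tfrac{\lambda}{\gamma}, \qquad \ell(t) := p(t) - \lambda q(t) + \tfrac{\mu}{\gamma},
\]
and verify by direct differentiation using \eqref{auxlin21}--\eqref{auxlin22} that $h' = \mu h$ and $\ell' = -\lambda \ell$, so that $h(t) = h(0)e^{\mu t}$ and $\ell(t) = \ell(0)e^{-\lambda t}$. For the sufficiency direction, assume $p(0) \geq \lambda/\gamma - \mu q(0)$, equivalently $h(0) \geq 0$. Then $h(t) \geq 0$ for all $t>0$, which immediately gives the second part of the proposition: $p(t) \geq \lambda/\gamma - \mu q(t)$. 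To conclude $q(t)>0$, I would argue by contradiction: if $t_1 > 0$ were the first zero of $q$, then $p(t_1) \geq \lambda/\gamma > 0$ by the preserved inequality, and $q'(t_1) = p(t_1) - \beta q(t_1) = p(t_1) > 0$ contradicts $q$ decreasing to zero at $t_1$ from positive values.

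For the necessity direction, suppose $h(0) < 0$. Combining the two invariants yields
\[
q(t) = \frac{1}{\gamma} + \frac{h(t) - \ell(t)}{\lambda + \mu} = \frac{1}{\gamma} + \frac{h(0) e^{\mu t} - \ell(0) e^{-\lambda t}}{\lambda + \mu},
\]
and since $h(0) < 0$ with $\mu > 0$, the term $h(0)e^{\mu t}$ drives $q(t) \to -\infty$ while $\ell(0)e^{-\lambda t}$ stays bounded. By continuity and $q(0)>0$, there exists a finite $t_c$ with $q(t_c) = 0$, completing the ``only if'' half.

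The main (and only) obstacle is getting the algebraic identities in Step 1 right; once $h' = \mu h$ and $\ell' = -\lambda \ell$ are verified, everything else is a short invariance-plus-continuity argument that parallels the endgame of Proposition \ref{phpl}. I expect the proof to be noticeably shorter than that of Proposition \ref{phpl} because the system \eqref{auxlin2} is already linear and we do not need to perform a preliminary nonlinear transformation.
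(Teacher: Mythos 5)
Your proof is correct, and it takes essentially the same route as the paper: both identify the unique critical point $(\beta/\gamma,1/\gamma)$ of \eqref{auxlin2} as a saddle with eigenvalues $-\lambda,\mu$, recognize the stable separatrix $p=\lambda/\gamma-\mu q$ as the threshold, and show that on one side $q$ stays positive while on the other the unstable mode forces $q\to -\infty$ in finite time. The only difference is presentational: you diagonalize via the left-eigenvector coordinates $h,\ell$ with decoupled scalar dynamics $h'=\mu h$, $\ell'=-\lambda\ell$ (and your Vieta identities $\lambda\mu=-k\gamma$, $\lambda-\mu=\beta$ do check out), which makes the invariance of $\{h\geq 0\}$ and the barrier argument $q'(t_1)=p(t_1)\geq\lambda/\gamma>0$ completely explicit, whereas the paper writes out the general solution with coefficients $A,B$ and reads off invariance from the phase portrait; the paper additionally records an explicit upper bound on the crossing time $t_c$, which you omit but which is not needed for the proposition as stated.
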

We will once again make use of the phase plane analysis technique developed in \cite{BL20}.
\begin{proof}
\eqref{auxlin2} is a linear system with critical point $(\beta /\gamma ,1/\gamma )$. Written in matrix form, the system is:
$$
\begin{bmatrix}
p - \frac{\beta}{\gamma}\\
q-\frac{1}{\gamma}	
\end{bmatrix}' = 
\begin{bmatrix}
0 & -k\gamma \\
1 & -\beta	
\end{bmatrix}
\begin{bmatrix}
p - \frac{\beta}{\gamma}\\
q-\frac{1}{\gamma}	
\end{bmatrix}.
$$
The eigenvalues of the coefficient matrix are $-\lambda$ and $\mu$ and the general solution to the system is,
\begin{align}
\label{auxlinsys2sol}
\begin{aligned}
& \begin{bmatrix}
p - \frac{\beta}{\gamma}\\
q-\frac{1}{\gamma}	
\end{bmatrix} = A
\begin{bmatrix}
k\gamma\\
\lambda	
\end{bmatrix} e^{-\lambda t}
+ B
\begin{bmatrix}
-k\gamma\\
\mu	
\end{bmatrix} e^{\mu t}.	
\end{aligned}
\end{align}
From the flow of solution trajectories we see that the separatrix with incoming trajectories serves to divide the upper half plane ($q>0$) into two invariant regions, one of which has the property that if $q(0)>0$,  then $q(t)>0$ for all $t>0$. 

Such separatrix corresponds to the special solutions with $B=0$, i.e.,
\begin{align*}
& \begin{bmatrix}
p - \frac{\beta}{\gamma}\\
q-\frac{1}{\gamma}	
\end{bmatrix} = A
\begin{bmatrix}
k\gamma\\
\lambda	
\end{bmatrix} e^{-\lambda t}.	
\end{align*}
Consequently, this trajectory equation is, 
$$
\lambda p = \frac{\lambda\beta}{\gamma} - k + k\gamma q. 
$$ 
Note that $\frac{1}{\lambda} = -\frac{\mu}{k\gamma}$ and $\beta+\mu=\lambda$, the above equation 
becomes 
\begin{align*}
p 
 = \frac{\lambda}{\gamma}-\mu q.
\end{align*}
\begin{figure}[H]
 \centering
 \includegraphics[width=0.6\linewidth]{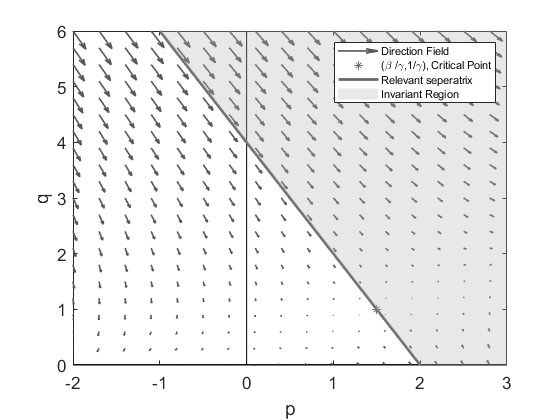}
 \caption{Direction field for linear system along with the invariant region.($\beta =1.5, k=-1,\gamma =1$)}
 \label{fig1}
\end{figure}
Thus the above mentioned region can be characterized by 
$$
\Sigma_{\gamma,\beta} := \left\{(p,q):  p\geq \frac{\lambda}{\gamma} -\mu q,\ q> 0 \right\}.
$$
Now suppose $(p(0),q(0)>0)\notin\Sigma_{\gamma,\beta}$. 
Since, the linear ODE system has only one critical point, we have that $\lim_{t\to\infty}(|p(t)|,q(t))= (\infty,-\infty)$. Hence, the solution crosses $q=0$ line at some finite time, $t_c$. This by itself concludes the proof but we will, however, derive an upper bound on $t_c$ using the general solution \eqref{auxlinsys2sol},
$$
q(t) = \frac{1}{\gamma} + \left(\frac{-p(0)+\lambda q(0) - \frac{\mu}{\gamma}}{\lambda+\mu}\right)e^{-\lambda t} + \left(\frac{p(0)+\mu q(0)-\frac{\lambda}{\gamma}}{\lambda+\mu}\right)e^{\mu t}.
$$   
Assuming $(p(0),q(0))\notin\Sigma_{\gamma,\beta}$, we have
\begin{align*}
\eta(t) & = 	\frac{1}{\gamma} + \left(\frac{-p(0)+\lambda q(0) - \frac{\mu}{\gamma}}{\lambda+\mu}\right)e^{-\lambda t} - \frac{\left|p(0)+\mu q(0)-\frac{\lambda}{\gamma}\right|}{\lambda+\mu}e^{\mu t}\\
& \leq \frac{1}{\gamma} +\frac{|p(0)|}{\lambda+\mu} +q(0) - \frac{\left|p(0)+\mu q(0)-\frac{\lambda}{\gamma}\right|}{\lambda+\mu}e^{\mu t}.
\end{align*}
Hence, $q(t_c) = 0$ for some
$$
t_c\leq \frac{1}{\mu}\ln\left( \frac{(\lambda+\mu)(q (0)+ \gamma^{-1}) +|p(0)|}{\left|p(0)+\mu q(0)-\lambda\gamma^{-1}\right|} \right).
$$ 
\end{proof}
\subsection{Comparison lemma}
We will now derive the comparison lemma.
\begin{lemma}[Comparison Lemma]
\label{comparison2}
Let $(w,s)$ be solution to \eqref{lin2} and $(p,q)$ be solution to \eqref{auxlin2}. Then as long $s\geq 0$, we have:\\ 
(i) For $c=c_1$, $\beta = \nu+\psi_M$: If $s(0)>q(0)$ and $w(0)>p(0)$, then 
$$
s(t)>q(t),\qquad w(t)>p(t).
$$
(ii) For $c=c_2$, $\beta = \nu+\psi_m$: If $s(0)<q(0)$ and $w(0)<p(0)$, then 
$$
s(t)<q(t),\qquad w(t)<p(t).
$$
\end{lemma}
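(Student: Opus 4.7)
My plan is to adapt the contradiction argument of Lemma \ref{comparison} to this coupled two-dimensional system, relying on an integrating-factor representation that automatically preserves both strict inequalities simultaneously. I will present part (i); part (ii) is handled by an analogous manipulation.

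Set $f := s - q$ and $g := w - p$, which are strictly positive at $t=0$ by hypothesis. Subtracting \eqref{auxlin2} from \eqref{lin2} with $\beta = \nu + \psi_M$ and $\gamma = c_1$, and eliminating $q$ via $q = s - f$, a short calculation gives
\begin{align*}
f' &= -\beta f + g + A(t), \qquad A(t) := (\psi_M - \psi\ast\rho)\,s, \\
g' &= -k c_1 f + B(t), \qquad B(t) := -k\bigl(c(x(t)) - c_1\bigr)\,s.
\end{align*}
The key observation is that both residuals are non-negative: $\psi\ast\rho \le \psi_M$, $c(x(t)) \geq c_1$, $k < 0$, and $s \geq 0$ throughout the region of consideration.

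Next, apply the integrating factor $e^{\beta t}$ to the $f$-equation and integrate the $g$-equation directly to obtain
\begin{align*}
f(t) &= e^{-\beta t} f(0) + \int_0^t e^{-\beta(t-\tau)}\bigl(g(\tau) + A(\tau)\bigr)\,d\tau, \\
g(t) &= g(0) + \int_0^t \bigl(-k c_1 f(\tau) + B(\tau)\bigr)\,d\tau.
\end{align*}
Suppose, for contradiction, that $t_1 > 0$ is the first time at which $f(t_1) = 0$ or $g(t_1) = 0$; then $f, g > 0$ on $[0, t_1)$, and by continuity $f, g \geq 0$ on $[0, t_1]$. Since $-k c_1 > 0$, both integrands above are non-negative on $[0, t_1]$, so together with $f(0), g(0) > 0$ the two identities force $f(t_1) > 0$ and $g(t_1) > 0$, contradicting the choice of $t_1$.

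The main subtlety, and what I expect to be the only delicate step, is finding the algebraic rearrangement that makes the residuals $A$ and $B$ non-negative. This is what forces the elimination of $q$ in favour of $s$ (so that the hypothesis $s \geq 0$ can be invoked rather than an assumption on $q$) and pins down the specific choices $\beta = \nu + \psi_M$, $\gamma = c_1$ in part (i). For part (ii) one works with $\tilde f := q - s$, $\tilde g := p - w$ and parameters $\beta = \nu + \psi_m$, $\gamma = c_2$; the analogous residuals then involve $\psi\ast\rho - \psi_m \ge 0$ and $c_2 - c(x(t)) \ge 0$, and the integrating-factor argument closes in exactly the same way.
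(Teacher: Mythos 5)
Your proposal is correct and follows essentially the same route as the paper's: argue by contradiction from the first violation time, subtract the two systems, and isolate the residuals $(\psi_M - \psi\ast\rho)\,s \ge 0$ and $-k(c-c_1)\,s \ge 0$ so that the choice $\gamma = c_1$, $\beta = \nu+\psi_M$ (resp.\ $\gamma = c_2$, $\beta = \nu+\psi_m$) makes the forcing terms sign-definite. The only stylistic difference is that the paper first integrates the $w-p$ equation to show $w(t_1)-p(t_1)>0$ and then evaluates the derivative $(s-q)'(t_1)$ to get the contradiction, whereas you write both $f = s-q$ and $g = w-p$ via integral (integrating-factor) representations and show both remain strictly positive at $t_1$ simultaneously; this is a slightly more symmetric version of the same idea.
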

\begin{proof}
We only prove the first assertion. Second assertion can be proved by similar arguments. We argue by contradiction: let $t_1$ be the first time at which statement (i) is violated. Subtracting \eqref{lin21} from \eqref{auxlin21}, and integrating we obtain,
\begin{align*}
w(t)-p(t) & = w(0)-p(0) - k\!\!\int_0^t (cs-\gamma q) d\tau\\
& = w(0)-p(0)  -k\gamma\!\!\int_0^t\!\!(s-q)\, d\tau - k\!\!\int_0^t\!\!\! s(c-\gamma)\, d\tau.
\end{align*}
Taking $\gamma = c_1=\min_{\mathbb{T}} c(x)$ and plugging in $t=t_1$ in the equation obtained, we have that 
$$
w(t_1)- p(t_1) \geq w(0)-p(0)  -k\gamma\!\!\int_0^{t_1} \!\!(s-q)\, d\tau>0. 
$$ 
Therefore, the only possibility left is that $s(t_1) = q(t_1)$.

Subtracting \eqref{lin22} from \eqref{auxlin22}, we obtain
\begin{align*}
(s-q)' & = (w-p) + \beta\eta - s(\nu + \psi\ast\rho)\\
& = (w-p) + \beta (q - s) + s(\beta - \nu-\psi\ast\rho).
\end{align*}
Note that $\psi\ast\rho\in [\min_\mathbb{R}\psi,\max_\mathbb{R} \psi] =  [\psi_m,\psi_M]$. Taking $\beta = \nu +\psi_M$ and plugging in $t=t_1$ in the above equation, we get 
$$
(s-q)'(t_1) \geq w(t_1)-p(t_1)>0. 
$$ 
This means that for $t<t_1$ sufficiently close,  we must have $s(t)<q(t)$, which is a contradiction. 
\end{proof}
\subsection{Proofs of Theorems \ref{nonlocvarcgs} and \ref{nonlocvarcftb} }

As usual, we will analyze the solution on a single characteristic and since the inequality in the statement of the theorem holds for all $x$, we can then collect all the characteristics to conclude the result. 

Therefore, it suffices to obtain the thresholds results for \eqref{nonlin2} using Proposition \ref{phpl2} and Lemma \ref{comparison2}. 

First we show that $G$ is always bounded form above irrespective of the choice of the initial data.  From \eqref{nonlin22}, we have
\begin{align}
\label{Gequality}
\begin{aligned}
G' & \leq -G(G-\nu -\psi\ast\rho) - kc\\
& = -(G^2 - (\nu +\psi\ast\rho)G + kc) \\
& = -(G-G_+)(G-G_-),
\end{aligned}
\end{align}
where 
$$
G_+= \Omega(c, \nu+\psi\ast\rho), \quad G_-=-\Theta(c, \nu+\psi\ast\rho)
$$ 
depend on $c$ and $\psi*\rho$, therefore changing in time.  
 
Note that 
$$
G_+ \leq  \Omega(c_2, \nu+\psi_M)
$$
and the fact that $G$ is non-increasing in the regime where $G \geq G_+$,  hence 
\begin{align*}
G \leq \max\{G(0),\sup G_+ \}  \leq  \max\left\{ u_x(0)+\nu+\psi_M,  \Omega(c_2, \nu+\psi_M)\right\}.
\end{align*}
Hence,
\begin{align*}
u_x &\leq \sup G  -\nu -\min\psi\ast\rho\\
& \leq \max\left\{ u_x(0), \Omega(c_2, \nu+\psi_M)-\nu-\psi_M \right\} +\psi_M- \psi_m\\
& = \max\left\{ u_x(0),  \Theta(c_2,\nu+\psi_M) \right\} +\psi_M- \psi_m
\end{align*}
Note that this upper bound holds irrespective of the hypothesis of the theorem. $u_x$ being upper bounded is a result of the dynamics of the system \eqref{nonlin2}.

We now handle the $\rho(0)=0\equiv \rho$ case before dealing with the case $\rho>0$ separately. 

In such case we have $\rho\equiv 0$. Consider \eqref{nonlin2} with $\rho(0) = \rho_0(\alpha)$,
 $G(0) = u_{0x}(\alpha)+\nu+\psi\ast\rho_0(\alpha)$ with a fixed $\alpha \in \mathbb{T}$.  
Hence along the characteristics starting from $\alpha$ we have 
\begin{align}
\label{Gequality+}
\begin{aligned}
G' & = -G(G-\nu -\psi\ast\rho) - kc\\
& = -(G-G_+)(G-G_-),
\end{aligned}
\end{align}
where $G_{\pm}$ are same as  above.   From phase line analysis, we have that 
$$
G(t) \geq \sup G_-
$$ 
for all $t>0$ if 
$$
G(0)\geq\sup G_- = - \Theta(c_1, \nu +\psi_M). 
$$
We will show that this indeed satisfies the threshold inequality in the theorem.
\begin{align*}
u_x(0) & = G(0)-\nu-\psi\ast\rho(0)\\
& > -\Theta(c_1,\nu+\psi_M)-\nu-\psi\ast\rho(0)\\
& = \sup G_- -\nu - \psi\ast\rho(0),
\end{align*}
then 
$$
u_x(t) = G(t)-\nu-\psi\ast\rho(t) \geq -\Theta(c_1,\nu+\psi_M)-\nu-\psi_M= -\lambda_M
$$ 
for all $t> 0$.

On the other hand, consider \eqref{nonlin2} with $\alpha=x_0$ as in the statement of the Theorem \ref{nonlocvarcftb}. Then from \eqref{nonlin22}, 
$$
G' = -(G-G_+)(G-G_-).
$$
From phase line analysis, we have that $G \to -\infty$ in finite time if 
$$
G(0)<\inf G_- = -\Theta(c_2, \nu+\psi_m).
$$
Hence, if
\begin{align*}
u_x(0) & = G(0)-\nu-\psi\ast\rho(0)\\
& < -\Theta(c_2,\nu+\psi_m) - \nu -\psi\ast\rho(0)
\end{align*}
then $\lim_{t\to t_c^-} G=\lim_{t\to t_c^-} u_x = -\infty $ for some time $t_c$ and this is indeed the statement of Theorem \ref{nonlocvarcftb}.

Now we deal with the case when $\rho>0$.

\noindent \textit{Proof of Theorem \ref{nonlocvarcgs}:}  Along the a fixed characteristics from $\alpha$, we rewrite 
the initial threshold condition in the theorem as,
$$
\frac{G(0)}{\rho(0)}>\frac{\lambda_M}{c_1} - \frac{\mu_M}{\rho(0)}, \quad \mu_M:=\Theta(c_1, \nu+\psi_M), 
$$
and this when transformed by \eqref{transformation2}, reads
$$
w(0)>\frac{\lambda_M}{c_1}-\mu_M s(0).
$$
We can then choose $p(0)<w(0)$ and $q(0)<s(0)$ in \eqref{auxlin2} such that the following holds,
$$
w(0)>p(0)\geq \frac{\lambda_M}{c_1} - \mu_M q(0)> \frac{\lambda_M}{c_1} - \mu_M s(0).
$$
Applying Lemma \ref{comparison2} and Proposition \ref{phpl2} for $\gamma=c_1$, $\beta = \nu+\psi_M$, we have that 
$$
w(t)>p(t)\geq \frac{\lambda_M}{c_1} - \mu_M q(t) > \frac{\lambda_M}{c_1} - \mu_M s(t).
$$
for all $t>0$ along with the positivity of $s$, i.e. $s(t)>0$.  Hence,
\begin{align*}
& w(t)> \frac{\lambda_M}{c_1} - \mu_M s(t),\quad \forall t>0.
\end{align*}
Transforming back to $(\rho, G)$, we have
$$
G(t)>\frac{\lambda_M\rho(t)}{c_1}-\mu_M.
$$
From this we can obtain a lower bound on $u_x$.
\begin{align*}
u_x & = G -\nu-  \psi\ast\rho > -\mu_M-\nu -\psi_M = -\lambda_M. 	
\end{align*}
Integrating \eqref{nonlin21}, 
\begin{align*}
\rho(t) & = \rho(0)e^{-\int_0^t u_xd\tau} < \rho(0)e^{\lambda_M t}.	
\end{align*}
Collecting all the characteristics finishes the proof of the theorem.\qed\\

\noindent\textit{Proof of Theorem \ref{nonlocvarcftb}:} 
Under the transformation \eqref{transformation2}, the initial threshold condition from the theorem reads,
$$
w(0)<\frac{\lambda_m}{c_2} - \mu_m s(0),
$$ 
where $\lambda_m:=\Omega(c_2, \nu+\psi_m)$ and $\mu_m:=\Theta(c_2, \nu+\psi_m)$. Consequently, in \eqref{auxlin2}, we can choose $p(0)>w(0)$ and $q(0)>s(0)$ such that the following holds,
$$
w(0)<p(0)<\frac{\lambda_m}{c_2}-\mu_m q(0) < \frac{\lambda_m}{c_2}-\mu_m s(0). 
$$
From Lemma \ref{comparison2}, we have that 
$$
w(t)<p(t), \quad s(t)<q(t)
$$ 
as long as $s\geq 0$.  Applying Proposition \ref{phpl2} with $\gamma = c_2$ and $\beta = \nu +\psi_m$, we have the existence of a finite time $t^*$ such that,
$$
q(t^*)=0.
$$
Therefore $s(t)$ must touch zero before $t^*$, say at $t_c<t^*$.   Consequently, $\lim_{t\to t_c^-} \rho(t) = \infty$ and therefore, from \eqref{nonlin21},
$$
 \lim_{t\to t_c^-}u_x(t,x(t,x_0)) = -\infty.
 $$
This concludes the proof.\qed

\section*{Acknowledgement} This work was partially supported by the National Science Foundation under Grant DMS181266. 

\bibliographystyle{abbrv}

\end{document}